%
%
\documentclass[a4paper,12pt]{article}
\usepackage[english]{babel}
\usepackage{amsthm,amsmath,amssymb,color}
\usepackage{hyperref}
%
%
\setlength{\textwidth}{7in}\setlength{\textheight}{9in}
\setlength{\topmargin}{0.0in}\setlength{\oddsidemargin}{-10mm}
\setlength{\evensidemargin}{-10mm}

\newtheorem{rema}{Remark}
\newtheorem{lemma}{Lemma}
\newtheorem{corollary}{Corollary}
\newtheorem{prop}{Proposition}
\newtheorem{thm}{Theorem}
\newtheorem{defn}{Definition}

\def \a   {\alpha}   
\def \b   {\beta}
\def \g   {\gamma}
\def \d   {\delta}
\def \l   {{\lambda}}
\def \dd {\mathrm{d}}
\def \eps {\varepsilon}

\def\E{{\mathbb{E}}}

\def\P{{\mathbb{P}}}
\def\R {{\mathbb{R}}}
\newcommand{\Z}{\mathbb{Z}}
\def\F{{\cal{F}}}
\def\G{{\cal{G}}}

\def\limt{\lim_{t\to\infty}}
\def\limt0{\lim_{t\to 0}}

\def\|{\,|\,}
\renewcommand{\mod}{\ \mathrm{mod}\ }
\def\sb{\mathcal{R}ademacher}

\title{Recurrence and transience of Rademacher series}
\author{Satyaki Bhattacharya and Stanislav Volkov\footnote{Centre for Mathematical Sciences, Lund University, Box 118 SE-22100, Lund, Sweden}} 

\begin{document}
\maketitle
\begin{abstract}
We introduce the notion of {\bf a}-walk $S(n)=a_1 X_1+\dots+a_n X_n$, based on a sequence of positive numbers ${\bf a}=(a_1,a_2,\dots)$ and a Rademacher sequence $X_1,X_2,\dots$. We study recurrence/transience (properly defined) of such walks for various sequences of ${\bf a}$. In particular, we establish the classification  in the cases where $a_k=\lfloor k^\b\rfloor$, $\b>0$,
as well as in the case $a_k=\lceil \log_\g k \rceil$ or $a_k=\log_\g k$ for $\g>1$.
\end{abstract}

\noindent {\bf Keywords}: recurrence, transience, Rademacher distribution, non-homogeneous Markov chains.

\noindent {\bf AMS subject classification}: 60G50, 60J10.  

\section{Introduction}
We will say that a random variable $X$ has a Rademacher distribution and write $X\sim \sb$, if $\P(X=+1)=\P(X=-1)=\frac 12$.

Let $X_i\sim \sb$, $i=1,2,\dots$, be i.i.d., and $\F_n =\sigma(X_1,X_2,\dots,X_n)$ be the sigma-algebra generated by the first $n$ members of this sequence. Let ${\bf a}=(a_1,a_2,\dots)$ be a non-random sequence of positive numbers. Define the ${\bf a}$-walk as 
$$
S(n)=a_1 X_1+a_2 X_2+\dots +a_n X_n=\sum_{k=1}^n a_k X_k
$$
with the convention $S(0)=0$. 
\begin{defn}\label{def1}
Let $C\ge 0$. We call the ${\bf a}$-walk $S$ defined above {\em $C-$recurrent}, if the event $\{|S(n)|\le C\}$ occurs for infinitely many $n$. (In case when $C=0$, this is equivalent to the usual recurrence, i.e., $S(n)=0$ for infinitely many $n$, so we will call the walk just {\em recurrent}.)

We call the ${\bf a}$-walk  {\em transient}, if it is not $C$-recurrent for any $C\ge 0$.
\end{defn}

Our aim is to determine the probability  that the ${\bf a}$-walk for given ${\bf a}$ and $C$ is recurrent; in principle, this probability may be different\footnote{For example, if ${\bf a}=(1,1,3,3,3,3\dots)$ then the {\bf a}-walk is recurrent with probability $1/2$.} from $0$ and $1$. A simplest example of an ${\bf a}$-walk is when all $a_i\equiv a\in\R_+$. Such a random walk is obviously a.s.\ recurrent since it is equivalent to the one-dimensional simple random walk.

The question of recurrence is naturally related to the Littlewood-Offord problem which deals with the maximization of probability $\P(S(n)=v)$ over all $v$, subject to various hypotheses on ${\bf a}$.
In particular, in~\cite{TAO} the authors develop an inverse Littlewood-Offord theory, using which they show that this probability is large only when the elements of ${\bf a}$ are  contained in a generalized arithmetic progression; see also~\cite{NG}.

The study of ${\bf a}$-walk is also somewhat relevant to the conjecture by Boguslaw Tomaszewski (1986), which says that  $\P\left( |S(n)| \le \sqrt{a_1^2+\dots+a_n^2}\right) \ge \frac12$ for all sequences ${\bf a}$ and all $n$. The conjecture was recently proved by Nathan Keller and Ohad Klein in~\cite{Klein}.

Let us first start with some general statements.
First, we show that the choice of $C>0$ is sometimes unimportant for the definition of $C$-recurrence.
\begin{thm}
Suppose that $a_n\to\infty$ and at the same time $|a_n-a_{n-1}|\to 0$ as $n\to\infty$. Then if an ${\bf a}$-walk is $C$-recurrent with a {\em positive} probability for some $C>0$ then it is $\tilde C$-recurrent with a {\em positive} probability for {\em all} $\tilde C>0$.
\end{thm}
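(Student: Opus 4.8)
If $\tilde C\ge C$ there is nothing to prove, since $\{|S(n)|\le C\}\subseteq\{|S(n)|\le\tilde C\}$; so I assume $0<\tilde C<C$. Write $B_\rho:=\{\,|S(n)|\le\rho \text{ for infinitely many }n\,\}$; we are given $\P(B_C)>0$ and must show $\P(B_{\tilde C})>0$. First cover $[-C,C]$ by finitely many intervals $J_1,\dots,J_L$ each of length $<\tilde C/2$. Since $\{|S(n)|\le C\}=\bigcup_{l}\{S(n)\in J_l\}$, a pigeonhole argument gives $B_C\subseteq\bigcup_l\{S(n)\in J_l\text{ i.o.}\}$, so $\P(S(n)\in J\text{ i.o.})>0$ for some interval $J\subseteq[-C,C]$ of length $<\tilde C/2$. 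It now suffices to ``translate'' $J$ onto $[-\tilde C,\tilde C]$, that is, to realise with positive probability a suitable deterministic shift of the walk (of magnitude $\le 2C$) carrying $J$ inside $[-\tilde C,\tilde C]$.

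Such a shift will be manufactured by freezing a finite block of the coordinates. Let $\mathcal B\subseteq\mathbb N$ be a finite set of large indices (to be chosen) and, for $n>\max\mathcal B$, split $S(n)=R_n+G$ with $G:=\sum_{k\in\mathcal B}a_kX_k$ and $R_n:=\sum_{k\le n,\ k\notin\mathcal B}a_kX_k$. Then $G$ takes finitely many values and is independent of the process $(R_n)_n$, which is again an ${\bf a}'$-walk, for the sequence ${\bf a}'$ obtained by deleting from ${\bf a}$ the entries indexed by $\mathcal B$ (still satisfying $a'_n\to\infty$ and $|a'_n-a'_{n-1}|\to 0$). Conditioning on $G$,
$$0<\P\bigl(S(n)\in J\text{ i.o.}\bigr)=\sum_{c}\P(G=c)\,\P\bigl(R_n\in J-c\text{ i.o.}\bigr),$$
so $\P(R_n\in J-c_0\text{ i.o.})>0$ for some value $c_0$ of $G$. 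If we can also find a value $c_1$ of $G$ with $J-c_0\subseteq[-\tilde C,\tilde C]-c_1$, then $\{R_n\in J-c_0\text{ i.o.}\}\subseteq\{R_n\in[-\tilde C,\tilde C]-c_1\text{ i.o.}\}$, and using $\{G=c_1\}\perp(R_n)_n$,
$$\P(B_{\tilde C})\ge\P\bigl(S(n)\in[-\tilde C,\tilde C]\text{ i.o.}\bigr)\ge\P(G=c_1)\,\P\bigl(R_n\in[-\tilde C,\tilde C]-c_1\text{ i.o.}\bigr)>0,$$
which is the claim. Since $J\subseteq[-C,C]$ has length $<\tilde C/2$, the inclusion $J-c_0\subseteq[-\tilde C,\tilde C]-c_1$ holds as soon as $c_1-c_0$ lies in a certain interval of length $>\tfrac32\tilde C$ contained in $(-2C,2C)$; hence everything reduces to the statement that the set of values of $G$ meets every subinterval of length $\tilde C$ of $(c_0-2C,\,c_0+2C)$.

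This last fact is where both hypotheses enter, and is the place where the work lies. As $a_n\to\infty$ while $a_1<\infty$, we have $\sum_n|a_{n+1}-a_n|=\infty$; since moreover $|a_{n+1}-a_n|\to0$, we may take $\mathcal B$ to be a union of disjoint consecutive pairs $(q_i,q_i+1)$ with every $q_i$ so large that $|a_{q_i+1}-a_{q_i}|<\tilde C/4$, and with $\sum_i|a_{q_i+1}-a_{q_i}|$ as large as we please (of the two ways of tiling a terminal segment of $\mathbb N$ by consecutive pairs, at least one has divergent sum of $|a_{q+1}-a_q|$). On those configurations in which each pair is \emph{balanced}, i.e.\ $X_{q_i}=-X_{q_i+1}$, the $i$-th pair contributes $\pm(a_{q_i+1}-a_{q_i})$ to $G$, so $G$ then ranges over a set $\{\sum_i\varepsilon_i d_i:\varepsilon_i=\pm1\}$ with $|d_i|<\tilde C/4$ and $\sum_i|d_i|$ large; such a set has consecutive gaps $<\tilde C/2$ and spans $[-\sum_i|d_i|,\sum_i|d_i|]$, i.e.\ it is already a fine net of a window of any prescribed width about $0$. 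The genuine obstacle is that the value $c_0$ handed to us by the decomposition need not be one of these balanced values: if $c_0$ comes from a configuration with unbalanced pairs it can be of order $\sum_i a_{q_i}$, and near its extreme values the value set of $G$ has a gap far wider than $\tilde C$, so the matching $c_1$ need not exist for that particular $\mathcal B$. I expect this to be the main difficulty, and would deal with it by choosing $\mathcal B$ adaptively — first locate $c_0$ for a preliminary pair-block, then adjoin a second pair-block that is fine enough and of the right magnitude near $c_0$ — or, equivalently, by first proving the stronger non‑centred statement ``$\P(S(n)\in I\text{ i.o.})>0$ for some bounded interval $I$ implies $\P(B_{\tilde C})>0$'' and inducting on the scale of $I$, the recursion terminating once $I$ is comparable to $[-\tilde C,\tilde C]$. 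Throughout, the only features of ${\bf a}$ used are $a_n\to\infty$ (to make the grid wide, via $\sum|a_{n+1}-a_n|=\infty$) and $|a_{n+1}-a_n|\to0$ (to make it fine).
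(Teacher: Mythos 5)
Your reduction (pigeonhole onto a short interval $J$, freeze a finite block $\mathcal B$, split $S(n)=R_n+G$, use independence, and realise a bounded compensating shift by re-choosing the frozen signs) is in the right spirit — it is essentially the paper's strategy. But the proof is not complete: the step you yourself flag as ``the genuine obstacle'' is exactly where the whole difficulty of the theorem sits, and neither of your suggested remedies resolves it as stated. The failure mode is structural: you pigeonhole over the \emph{values} $c_0$ of $G$, and the value handed to you by the decomposition may be attained only by inflexible configurations (e.g.\ all pairs aligned), near which the value set of $G$ has gaps of order $a_{q_i}\to\infty$, so the required $c_1$ with $c_1-c_0$ in a window of width $\tfrac32\tilde C$ need not exist. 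Your ``adaptive'' fix does not escape this: once you adjoin a second pair-block $\mathcal B_2$, you must condition on its contribution as well, and the conditioning can again hand you an extreme, unbalanced value of $G_2$, reproducing the same sparsity problem one level down. The second fix (a non-centred statement plus induction on the scale of $I$) is only a restatement of what has to be proved; the single induction step still needs the missing mechanism.

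The missing idea is to pigeonhole over \emph{configurations} of the frozen block rather than over values of $G$, and to show that the ``flexible'' configurations carry almost all the probability, so that the i.o.\ event cannot be supported entirely on inflexible ones. This is what the paper does: it picks positions $n_1<n_2<\dots$ with $C/6\le a_{n_{k+1}}-a_{n_k}\le C/3$, lets $\Omega_K^*$ be the configurations of $(X_1,\dots,X_K)$ containing a $(-1,+1,-1)$ pattern at some triple $(n_k,n_{k+1},n_{k+2})$, shows $\P(\bar X_K\notin\Omega_K^*)\le(1-\tfrac18)^{\lfloor\kappa/3\rfloor}\to0$, and hence for $K$ large finds a configuration $y\in\Omega_K^*$ with $\P(\{\bar X_K=y\}\cap E)>0$; flipping two of the three distinguished signs then shifts $s_y$ by a controlled amount in $\pm[C/3,2C/3]$, in the direction dictated by whether the future visits are to $[0,C]$ or $[-C,0]$, giving $2C/3$-recurrence, and iteration $C\to 2C/3\to\dots$ yields any $\tilde C$. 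Your balanced-pair construction could be completed the same way — condition on the configuration of the pairs, use a concentration/counting argument to show that with probability close to $1$ the balanced pairs alone satisfy $\sum_i|d_i|\ge 2C$ (so that flipping a suitable subset of them realises any shift in $(-2C,2C)$ up to error $<\tilde C/2$), and conclude that some such configuration is compatible with $\{S(n)\in J\ \text{i.o.}\}$ — but that counting step is precisely what your write-up lacks, so as it stands the argument has a genuine gap.
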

\begin{proof}
Since the notion of $C$-recurrence is monotone in $C$, i.e.\, if an ${\bf a}$-walk is $C_1-$recurrent for $C_1>0$ then it is $C_2$-recurrent for all $C_2\ge C_1$, it suffices to prove that $C-$recurrence implies $\frac{2C}3-$recurrence.

Indeed, suppose the ${\bf a}$-walk is $C$-recurrent; formally, if we define the events
\begin{align*}
E&=\{S(n)\in [-C,C]\text{ for infinitely many }n\},\\
\tilde{E}&=\{S(n)\in [-2C/3,2C/3]\text{ for infinitely many }n\}
\end{align*}
then $\P(E)>0$. We want to show that $\P(\tilde{E})>0$ as well.

Let $n_1$ be so large that $|a_i-a_{i-1}|<C/6$ for all $i\ge n_1$. Define the sequence $n_k$, $k\ge 2$, by setting
$$
n_k=\min\{i\ge n_{k-1}+1:\ a_i\ge a_{n_{k-1}}+C/6\}
$$ 
(which is well-defined since $a_i\to\infty$), then trivially
\begin{align}\label{eqNML}
 \frac{C}{6} \le a_{n_{k+1}}-a_{n_k}\le \frac{C}{3}\qquad \text{for each }k=1,2,\dots
\end{align}

Fix a positive integer $K$ and for $y=(y_1,y_2,\dots,y_K)\in\Omega_K:=\{-1,+1\}^K$ define 
\begin{align*}
\bar{X}_K&= \{X_1,X_2,\dots,X_K\};\\
s_y&=a_1 y_1+a_2 y_2+\dots a_k y_K.
\end{align*}
Let $y\in\Omega_K$ be such that $\P(\{\bar{X}_K=y\}\cap E)>0$. Observe that
\begin{align*}
\{\bar{X}_K=y\}\cap E  &=\{\bar{X}_K=y\}\cap B_K(s_y)
\end{align*}
where
\begin{align*}
B_K^+(u)&=
\{\text{there exist}\ m_1<m_2<\dots
\text{ such that }u+\sum_{i=K+1}^{m_j} a_i X_i\in [0,C]\};
\\
B_K^-(u)&=
\{\text{there exist}\ m_1'<m_2'<\dots
\text{ such that }u+\sum_{i=K+1}^{m_j'} a_i X_i\in [-C,0]\};
\\
B_K(u)&=B_K(u)^+\cup B_K(u)^-.
\end{align*}
Since $\{\bar{X}_K=y\}$ and $B_K(u)$ are independent, we have
$$
\P(\{\bar{X}_K=y\}\cap B_K(s_y))=\P(\bar{X}_K=y)\, \P(B_K(s_y)).
$$
Consequently, $\P(B_K(s_y))>0$, and as a result, $\P(B_K^+(s_y))>0$ or $\P(B_K^-(s_y))>0$ (or both).

Let $\Omega_K^*\subseteq \Omega_K$ contain those $y$s for which there is an index $k$ such that $n_{k+2}\le K$ and $y_{n_k}=-1$, $y_{n_{k+1}}=+1$, $y_{n_{k+2}}=-1$; let $k$ be the smallest such index.
For  $y\in\Omega_K^*$ define the mappings  $\sigma^+,\sigma^-:\Omega_K^*\to\Omega_K$ by
\begin{align*}
\sigma^+(y)&=
\begin{cases}
-y_i, &\text{if }i=n_k\text{ or }i=n_{k+1};\\
y_i,&\text{otherwise};
\end{cases}
\\
\sigma^-(y)&=
\begin{cases}
-y_i, &\text{if }i=n_{k+1}\text{ or }i=n_{k+2};\\
y_i,&\text{otherwise}.
\end{cases}
\end{align*}
Then for $y\in\Omega^*_K$
\begin{align*}
s_{\sigma^+(y)}&=s_y+2a_{n_k}-2a_{n_k+1}\in [s_y-2C/3,s_y-C/3],\\
s_{\sigma^-(y)}&=s_y-2a_{n_k}+2a_{n_k+1}\in [s_y+C/3,s_y+2C/3].
\end{align*}
As a result, it is not hard to see that
\begin{align*}
\{\bar{X}_K=\sigma^+(y)\}\cap B_K^+(s_y)
&  \subseteq
\left\{\sum_{i=1}^{m} a_i X_i\in [-2C/3,2C/3]\text{ for infinitely many }ms\right\}=\tilde{E},\\
 \{\bar{X}_K=\sigma^-(y)\}\cap B_K^-(s_y)
&  \subseteq
\left\{\sum_{i=1}^{m} a_i X_i\in [-2C/3,2C/3]\text{ for infinitely many }ms\right\}=\tilde{E}.
\end{align*}
Since at least one of $B_K^+(s_y)$ and $B_K^-(s_y)$ has a positive probability, $\P\left(\bar{X}_K=\sigma^\pm(y)\right)=2^{-K}$ and  the events on the LHS are independent, we conclude that $\P(\tilde E)>0$.

Now it only remains to show that there exists $y\in \Omega_K^*$ such that $\P(\{\bar{X}_K=y\}\cap E)>0$. Let 
$\kappa:=\kappa(K)=\max\{k\in\Z_+:\ n_k\le K\}$; obviously, $\kappa(K)\to\infty$ as $K\to\infty$. If we choose $y$ from $\Omega_K$ uniformly, we can trivially bound the probability that $y\notin \Omega_K^*$ by\footnote{exact: see the sequence {\rm A005251} in the online encyclopedia of integer sequences (\rm{https://oeis.org/A005251}), $\P\left(\bar{X}_K\notin \Omega^*_K\right)\approx \l^\kappa$, 
$\l={\frac {\sqrt [3]{100+12\,\sqrt {69}}}{6}}+{\frac {2}{3\,\sqrt [3]{100
+12\,\sqrt {69}}}}+{\frac{2}{3}}=0.877...$,
$\kappa=\kappa(K)$}
$$
\left(1-\frac 18\right)^{\lfloor \kappa/3\rfloor}\to 0\quad\text{as }\kappa\to\infty
$$
by grouping together triples $\left(X_{n_1},X_{n_2},X_{n_3}\right)$, $\left(X_{n_4},X_{n_5},X_{n_6}\right)$, etc.; in each such a triple 
$$
\P\left( (X_{n_k},X_{n_{k+1}},X_{n_{k+2}})=(-1,+1,-1)\right)=1/8.
$$
Hence
\begin{align}\label{above}
\P(E)=\sum_{y\in \Omega_K}\P(\{\bar{X}_K=y\}\cap E)
= \sum_{y\in \Omega_K^*}\P(\{\bar{X}_K=y\}\cap E)
+\P\left(\left\{\bar{X}_K\in \Omega_K\setminus \Omega_K^* \right\}\cap E\right).
\end{align}
Since $\P\left(\left\{\bar{X}_K\in \Omega_K\setminus \Omega_K^* \right\}\cap E\right)\le  \P\left(\bar{X}_K\in \Omega_K\setminus \Omega_K^* \right)$, by making $K$ sufficiently large, we can ensure that the second term on the RHS of~\eqref{above} is less than $\P(E)$, implying that there exist some $y\in\Omega_K^*$ such that $\P(\{\bar{X}_K=y\}\cap E)>0$, as required.
\end{proof}

\vskip 5mm

Our next result shows that if the sequence ${\bf a}$ is non-decreasing, then the walk will always ``jump'' over $0$ infinitely many times, even if the walk is not $C$-recurrent.
\begin{thm}\label{th:saty}
Suppose that $a_i$ is a non-decreasing positive sequence. Then the event $\{S(n)>0\}$ holds for infinitely many $n$ a.s.
The same is true for the event $\{S(n)<0\}$.
\end{thm}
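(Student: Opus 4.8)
The plan is to prove the first assertion, $\P\big(S(n)>0\text{ for infinitely many }n\big)=1$; the statement about $\{S(n)<0\}$ then follows by applying it to the sequence $(-X_1,-X_2,\dots)$, which has the same law and replaces $S$ by $-S$. Write ``$S(n)\le c$ eventually'' for the event $\bigcup_{N\ge 1}\{S(n)\le c\text{ for all }n\ge N\}$. Then it suffices to show $f(0)=0$, where
$$
f(c):=\P\big(S(n)\le c\text{ eventually}\big),\qquad c\in\R,
$$
since $\P(S(n)>0\text{ for infinitely many }n)=1-f(0)$. Note $f$ is non-decreasing.

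The crux is the following claim: \emph{on the event $\{S(n)\le c\text{ for all }n\ge N\}$, almost surely $S(n)\le c-a_1$ for all sufficiently large $n$.} To see this, call an integer $n>N$ \emph{critical} if $S(n-1)>c-a_n$. On the event in question one must have $X_n=-1$ at every critical $n$, since otherwise $S(n)=S(n-1)+a_n>c$ while $n\ge N$, a contradiction. Whether $n$ is critical is $\F_{n-1}$-measurable, so the successive critical times $\tau_1<\tau_2<\cdots$ are stopping times; and conditionally on $\F_{\tau_j-1}$ the sign $X_{\tau_j}$ is an unbiased coin flip, so an easy induction shows that the probability that the first $J$ critical times all have outcome $-1$ is at most $2^{-J}$. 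Letting $J\to\infty$, almost surely on $\{S(n)\le c\text{ for all }n\ge N\}$ there are only finitely many critical times, hence eventually $S(n-1)\le c-a_n\le c-a_1$, which is the claim.

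Taking the union over $N$, the claim gives $\{S(n)\le c\text{ eventually}\}\subseteq\{S(n)\le c-a_1\text{ eventually}\}$ up to a null event, i.e.\ $f(c)\le f(c-a_1)$; together with monotonicity this yields $f(c)=f(c-a_1)$ for all $c$, and then, iterating and comparing arbitrary levels by monotonicity, $f$ is constant, say $f\equiv f_0$. Since $\{S(n)\to-\infty\}=\bigcap_{k\ge1}\{S(n)\le -k\text{ eventually}\}$ is a decreasing intersection, $\P(S(n)\to-\infty)=\lim_{k\to\infty}f(-k)=f_0$. Now $\{S(n)\to-\infty\}$ is a tail event of the i.i.d.\ sequence $(X_i)$, so $f_0\in\{0,1\}$ by Kolmogorov's $0$--$1$ law; and $f_0=1$ is impossible, for the symmetry $X_i\mapsto-X_i$ would then also force $\P(S(n)\to+\infty)=1$, which is absurd since the two events are disjoint. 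Hence $f_0=0$, so in particular $f(0)=0$, as required.

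The main obstacle is the crux claim, specifically the ``infinitely many forced unbiased signs'' step: one must verify that the critical times are genuine stopping times and run the induction showing the first $J$ forced outcomes occur with probability at most $2^{-J}$. Once the claim is in hand, the monotone-function bootstrap, the passage to a tail event, and the symmetry argument are all routine.
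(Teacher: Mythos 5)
Your argument is correct, and it takes a genuinely different route from the paper. The paper proves the theorem via a coupling/stochastic-domination statement (its Proposition~\ref{prop:Saty}): starting from a positive value $A$ at time $m$, the time for the walk to return to $(-\infty,0]$ is stochastically dominated by the time for a simple random walk to hit $-\lceil A/a_{m+1}\rceil$, which is a.s.\ finite; the theorem then follows by symmetry. That quantitative domination is stronger than what the theorem needs and is reused later in the paper (to bound downcrossing probabilities in the proof of Theorem~\ref{th:contlog}). Your proof is instead qualitative: the ``forced sign at critical times'' step is sound (the key points, which you correctly flag, are that $\{\tau_j=n\}\in\F_{n-1}$ so that $X_{\tau_j}$ is conditionally a fair coin, and the induction giving the bound $2^{-J}$ for $\P(\tau_J<\infty,\ X_{\tau_1}=\dots=X_{\tau_J}=-1)$), and the rest — the level bootstrap $f(c)=f(c-a_1)$, the identification of $\lim_k f(-k)$ with $\P(S(n)\to-\infty)$, Kolmogorov's $0$--$1$ law, and the symmetry contradiction — is routine and correctly executed (each equality of $f$-values uses its own null set, and only countably many are needed). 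One side remark: your argument uses monotonicity of $(a_i)$ only through $a_n\ge a_1>0$, so it in fact proves the statement under the weaker hypothesis $\inf_n a_n>0$; the paper's coupling, by contrast, genuinely uses monotonicity but yields the extra stochastic-domination information exploited elsewhere.
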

The theorem immediately follows from symmetry and the more general
\begin{prop}\label{prop:Saty}
Suppose that $a_i$ is a non-decreasing sequence, $m$ is an integer such that $a_{m+1}>0$, and  $S(m)=A>0$.
Define
$$
\tau=\inf\{k\ge 0:\ S(m+k)\le 0\}.  
$$
Let $Y_j\sim\sb$ be  i.i.d., and 
$$
\tilde\tau=\inf\{k\ge 0:\ Y_1+Y_2+\dots+Y_k\le -r\}
$$
where $r=\lceil A/a_{m+1}\rceil$; note that $\tilde\tau<\infty$ a.s.\ and that, in fact, $Y_1+\dots+Y_{\tilde\tau}=-r$. 
Then $\tau$ is stochastically smaller than $\tilde\tau$, that is,
$$
\P(\tau>m)\le\P(\tilde\tau>m), \ \ m=0,1,2,\dots
$$
\end{prop}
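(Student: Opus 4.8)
The plan is to construct a coupling between the $\mathbf{a}$-walk started from $S(m)=A>0$ and the simple symmetric random walk $T_k=Y_1+\dots+Y_k$ started from $r=\lceil A/a_{m+1}\rceil$, in such a way that if the $\mathbf{a}$-walk has reached $(-\infty,0]$ by some step then so has the rescaled SRW reached $(-\infty,-r]$; this immediately gives $\tau\le\tilde\tau$ pathwise, hence the stochastic domination. The natural coupling is the obvious one: feed the same Rademacher increments into both, i.e.\ set $Y_j=X_{m+j}$. Then the claim to be proved becomes: on the event $\{\tau>k\}$, i.e.\ $S(m+i)>0$ for all $i\le k$, we must also have $T_i>-r$ for all $i\le k$. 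Equivalently, the first time the $\mathbf{a}$-walk goes nonpositive is no later than the first time $T$ drops to $-r$.

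First I would record the key monotonicity input: since $(a_i)$ is non-decreasing and $a_{m+1}>0$, every increment $a_{m+i}X_{m+i}$ has absolute value at least $a_{m+1}$. The heart of the argument is a deterministic comparison lemma: define $U_k = S(m+k)$ and compare it to the quantity $a_{m+1}\cdot(r + T_k)$ — or rather argue by tracking how far $U_k$ sits above $0$. Concretely, I would prove by induction on $k$ that, as long as $U_i>0$ for all $i\le k$, we have $U_k \ge A + a_{m+1} T_k \ge a_{m+1}(r + T_k) - a_{m+1}$ is \emph{not} quite the right inequality because the $a$'s grow; instead the correct bookkeeping is: each $-1$ step of $X$ decreases $U$ by at least $a_{m+1}$ (in fact by the current $a_{m+i}\ge a_{m+1}$), and each $+1$ step increases $U$ by at least $a_{m+1}$ as well, but since we only care about hitting $0$ from above, the relevant monotone functional is something like $V_k=\lceil U_k/a_{m+k+1}\rceil$ or a "ruin-counting" argument. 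The cleanest route: show that whenever $T$ makes a downward excursion the $\mathbf{a}$-walk's decrease dominates it, so the number of net down-steps needed to push $U$ below $0$ is at most $r$.

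Let me be more precise about the induction I expect to work. Claim: for every $k<\tau$, one has $S(m+k) \le A + a_{m+1}\,T_k$ when $T_k\ge 0$, but more usefully, $\lceil S(m+k)/a_{m+1}\rceil$ is nonincreasing in a way controlled by $-T_k$; the true statement to push through is that the event $\{S(m+i)>0,\ 0\le i\le k\}$ forces $T_k \ge -(r-1)$, equivalently $T_k > -r$. Induct on $k$: at $k=0$, $T_0=0>-r$ since $r\ge 1$ (because $A>0$, so $\lceil A/a_{m+1}\rceil\ge 1$). For the inductive step, suppose $S(m+i)>0$ for $i\le k+1$ and, toward a contradiction or directly, track the first time $j\le k+1$ at which $T_j=-r$; at that moment the walk $T$ has made, in net, $r$ more down-steps than up-steps since time $0$. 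Because each $X$-increment scales the $\mathbf{a}$-walk by a factor $\ge a_{m+1}$, and because among the first $j$ steps the partial sums of $X$ never went below $-r$ before time $j$ combined with $a$ being non-decreasing, one shows $S(m+j) \le A + a_{m+1}\cdot T_j$ wait — this needs care when $T$ dips and comes back since then larger $a$'s multiply later steps. The right inequality exploits an Abel-summation / rearrangement: writing $S(m+j)-A = \sum_{i=1}^j a_{m+i} X_{m+i}$ and summing by parts against the monotone sequence $a_{m+i}$ and the running minimum of the partial sums $T_i$, one gets $S(m+j)-A \le a_{m+1}\,T_j^{\min}$-type bounds where $T_j^{\min}=\min_{i\le j}T_i$. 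Hence if $T$ ever hits $-r$, then $S(m+j)-A\le a_{m+1}\cdot(-r) \le -A$, so $S(m+j)\le 0$, i.e.\ $\tau\le j$; contrapositively, $\tau>k$ implies $T$ did not hit $-r$ by time $k$, i.e.\ $\tilde\tau>k$. This yields $\{\tau>k\}\subseteq\{\tilde\tau>k\}$ on the coupled space, proving $\P(\tau>k)\le\P(\tilde\tau>k)$.

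The main obstacle I anticipate is exactly this summation-by-parts step: because the weights $a_{m+i}$ are increasing, a down-step taken late (multiplying a large $a$) helps push $S$ below $0$ more than an early one, but the adversary controlling the signs could try to arrange up-steps late and down-steps early; the monotonicity of $(a_i)$ is what defeats this, and making that rigorous — i.e.\ proving $\sum_{i=1}^j a_{m+i}X_{m+i} \le a_{m+1}\min_{1\le i\le j}\sum_{\ell=1}^i X_{m+\ell}$ whenever all the partial sums $S(m+i)$ are positive — requires a careful induction or an Abel-summation bound $\sum a_{m+i}X_{m+i} = \sum_{i} (a_{m+i}-a_{m+i+1}) T_i^{(\text{prefix})} + a_{m+j+1}T_j$ style rearrangement with the correct handling of the boundary term. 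I would also need to double-check the degenerate possibilities ($\tau=0$, which only happens if $A\le 0$, excluded; and confirming $\tilde\tau<\infty$ a.s.\ with $T_{\tilde\tau}=-r$ exactly, which is standard for SRW since it has $\pm1$ steps). Everything else — measurability, the reduction of Theorem~\ref{th:saty} to this proposition by symmetry and by noting $a_{m+1}>0$ eventually — is routine.
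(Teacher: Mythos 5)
Your overall strategy is exactly the paper's: couple by setting $Y_j=X_{m+j}$, and show deterministically that at the first time $j$ the simple walk $T_i=Y_1+\dots+Y_i$ reaches $-r$ one has $S(m+j)\le A+a_{m+1}T_j=A-ra_{m+1}\le 0$, whence $\tau\le\tilde\tau$ pathwise. However, the heart of the matter -- the deterministic comparison -- is precisely what you leave unproved, and the two routes you sketch for it do not work as stated. The general inequality you propose, $\sum_{i=1}^j a_{m+i}X_{m+i}\le a_{m+1}\min_{1\le i\le j}T_i$ ``whenever all the partial sums $S(m+i)$ are positive'', is simply false: take $a_{m+1}=1$, $a_{m+2}=10$, $A=100$ and $(Y_1,Y_2)=(-1,+1)$; then $S(m+1),S(m+2)>0$, the left side is $9$ and the right side is $-1$. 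And the Abel summation you write against the \emph{prefix} sums, $\sum_i(a_{m+i}-a_{m+i+1})T_i+a_{m+j}T_j$, does not close the needed special case either: before the first visit to $-r$ the prefix sums $T_i$ can be strictly negative (as soon as $r\ge2$), and then the terms $(a_{m+i}-a_{m+i+1})T_i$ are nonnegative, i.e.\ of the wrong sign, so this decomposition alone does not give $\le a_{m+1}T_j$. So as written there is a genuine gap: the key lemma is asserted, its stated general form is wrong, and the sketched mechanism fails.

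The missing idea -- which is the paper's proof -- is to sum by parts against the \emph{tail} sums rather than the prefix sums, evaluated at $k=\tilde\tau$. At that time $Y_1+\dots+Y_k=-r$ while $Y_1+\dots+Y_{i-1}>-r$ for $i\le k$, so every tail sum $R_i:=Y_i+\dots+Y_k=-r-(Y_1+\dots+Y_{i-1})$ is negative (in particular $Y_k=-1$). Writing
\begin{align*}
\sum_{i=1}^{k}a_{m+i}Y_i=a_{m+1}R_1+\sum_{i=2}^{k}(a_{m+i}-a_{m+i-1})R_i\le a_{m+1}R_1=-ra_{m+1}\le -A,
\end{align*}
since each $a_{m+i}-a_{m+i-1}\ge0$ and each $R_i<0$; equivalently, one replaces $a_{m+k}$ by $a_{m+k-1}$ in front of $Y_k$, then $a_{m+k-1}$ by $a_{m+k-2}$ in front of $Y_{k-1}+Y_k$, and so on down to $a_{m+1}(Y_1+\dots+Y_k)$, which is the paper's iterative version of the same computation. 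Note that the correct instance of your ``running minimum'' inequality is the one where the minimum is attained at the current time (a new running minimum), which is exactly the situation at $\tilde\tau$; with that repair, and dropping the superfluous positivity hypothesis on the $S(m+i)$, your argument becomes the paper's.
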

\begin{proof} We will use coupling. Indeed, we can write
$$
S(m+j)=A+a_{m+1} Y_1+a_{m+2} Y_2+
\dots +a_{m+j} Y_j,\quad j=1,2,\dots
$$
Suppose that $\tilde\tau=k$, that is
\begin{align*}
Y_1>-r,\ Y_1+Y_2>-r,\ \dots,\  Y_1+Y_2+\dots+Y_{k-1}&>-r;
\\
Y_1+Y_2+\dots+Y_{k-1}+Y_k&= -r.
\end{align*}
Then, recalling that $a_i$ is a non-decreasing sequence,
\begin{align*}
S(m+k)&=A+a_{m+1}Y_{1}+\dots+a_{m+k-1} Y_{k-1}+a_{m+k} Y_k
\\&
\le A+a_{m+1}Y_{1}+\dots+a_{m+k-2} Y_{k-2}+a_{m+k-1} Y_{k-1}+a_{m+k-1} Y_k \quad(\text{since }Y_k=-1)\\
&=A+a_{m+1}Y_{1}+\dots+a_{m+k-2} Y_{k-2}+ a_{m+k-1} [Y_{k-1}+Y_k]
\\ &
\le A+a_{m+1}Y_{1}+\dots+a_{m+k-2} Y_{k-2}+ a_{m+k-2} [Y_{k-1}+Y_k]
\\ &=
A+a_{m+1}Y_{1}+\dots+a_{m+k-2} [Y_{k-2}+Y_{k-1}+ Y_k]
\\ &
\le \dots
\le A+a_{m+1}[Y_1+{\scriptstyle \dots}+Y_k]
=A-r a_{m+1} \le 0,
\end{align*}
since $Y_k$, $Y_{k-1}+Y_k$, $Y_{k-2}+Y_{k-1}+Y_k$, $\dots$, $Y_1+\dots+Y_k$ are all negative. Therefore, $\tau\le \tilde \tau$.
\end{proof}

Throughout the paper we will use a version of the Azuma-Hoeffding inequality; compare with the results of~\cite{MONT}.
\begin{lemma}\label{lem:AH}
Suppose that $b_1,b_2,\dots,b_m$ is a sequence of non-negative numbers and $\mathcal{S}=b_1 Y_1+b_2 Y_2+\dots+b_m Y_m$, where $Y_j\sim \sb$ are i.i.d. Then 
\begin{align}\label{eq:Azcont}
\P\left(|\mathcal{S} | \ge A \right)\le 2\exp\left(-\frac{A^2}{2(b_1^2+\dots+b_m^2)}\right)\text{ for all }A>0.
\end{align}
\end{lemma}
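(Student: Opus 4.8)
The statement to prove is the Azuma--Hoeffding inequality for Rademacher variables (Lemma \ref{lem:AH}). This is a classical result, so the plan is to give the standard exponential moment (Chernoff) argument.

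\medskip

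\textbf{Plan of proof.} The plan is to use the exponential Markov inequality combined with a pointwise bound on the moment generating function of a single Rademacher variable. First I would fix $t>0$ and observe that by Markov's inequality applied to $e^{t\mathcal{S}}$,
$$
\P(\mathcal{S}\ge A)\le e^{-tA}\,\E\bigl[e^{t\mathcal{S}}\bigr]
= e^{-tA}\prod_{j=1}^m \E\bigl[e^{t b_j Y_j}\bigr],
$$
using independence of the $Y_j$ in the last step. Next I would establish the elementary bound $\E[e^{s Y}]=\cosh(s)\le e^{s^2/2}$ valid for all real $s$ and $Y\sim\sb$; this follows by comparing the Taylor series $\cosh s=\sum_{k\ge 0} s^{2k}/(2k)!$ term by term with $e^{s^2/2}=\sum_{k\ge 0}s^{2k}/(2^k k!)$ and noting $(2k)!\ge 2^k k!$. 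Applying this with $s=t b_j$ gives
$$
\P(\mathcal{S}\ge A)\le \exp\!\Bigl(-tA+\tfrac{t^2}{2}\textstyle\sum_{j=1}^m b_j^2\Bigr).
$$
Then I would optimize over $t>0$: the exponent is minimized at $t=A/\sum_j b_j^2$, which yields $\P(\mathcal{S}\ge A)\le \exp\bigl(-A^2/(2\sum_j b_j^2)\bigr)$.

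\medskip

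Finally, I would note that $-\mathcal{S}=\sum_j b_j(-Y_j)$ has the same distribution as $\mathcal{S}$ since $-Y_j\sim\sb$ as well (here non-negativity of the $b_j$ is not even essential, only that the $b_j^2$ appearing are the same), so the identical bound holds for $\P(\mathcal{S}\le -A)=\P(-\mathcal{S}\ge A)$. A union bound over the two one-sided events gives
$$
\P(|\mathcal{S}|\ge A)\le 2\exp\!\Bigl(-\frac{A^2}{2(b_1^2+\dots+b_m^2)}\Bigr),
$$
which is exactly \eqref{eq:Azcont}. One should also handle the degenerate case $b_1=\dots=b_m=0$ separately (then $\mathcal{S}=0$ and the inequality is vacuous for $A>0$), but that is trivial.

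\medskip

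\textbf{Main obstacle.} There is no serious obstacle here: the only technical point is the inequality $\cosh s\le e^{s^2/2}$, and that is a one-line series comparison. The proof is entirely routine and self-contained.
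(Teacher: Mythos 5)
Your proof is correct: the exponential Markov bound, the estimate $\E[e^{sY}]=\cosh s\le e^{s^2/2}$, the optimization $t=A/\sum_j b_j^2$, and the two-sided union bound together give exactly \eqref{eq:Azcont}. Note that the paper itself states Lemma~\ref{lem:AH} without proof, treating it as a standard version of the Azuma--Hoeffding inequality (with a pointer to the literature), so your self-contained Chernoff-type argument is precisely the routine derivation the authors had in mind and nothing is missing.
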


We also state the following fairly standard result.
\begin{lemma}\label{lem:CLT}
Let $T_i=Y_1+\dots+Y_i$ be a simple random walk. Suppose that $L_k$ and $y_k$, $k=1,2,\dots$, are two sequences such that $L_k\to\infty$, $y_k\to\infty$ and $y_k/\sqrt{L_k}\to r>0$.
Then
$$
\lim_{k\to\infty} \P\left(\max_{1\le i\le L_k} T_i\ge y_k\right)= 2\,\P(\eta\ge r)=2-2\,\Phi(r)
$$
where $\eta\sim\mathcal{N}(0,1)$ and $\Phi(\cdot)$ is its CDF.
\end{lemma}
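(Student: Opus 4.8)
The plan is to reduce the statement to the classical functional central limit theorem (Donsker's invariance principle) together with the reflection principle for Brownian motion. First I would recall that the reflection principle for the simple random walk gives, for any positive integer $L$ and any real $y>0$,
\begin{align*}
\P\left(\max_{1\le i\le L} T_i\ge y\right)
= \P(T_L \ge y) + \P(T_L > y)
\le 2\,\P(T_L\ge y)
\end{align*}
(with the usual parity caveat; one can also write the exact identity $\P(\max_{1\le i\le L}T_i\ge y)=\P(T_L\ge \lceil y\rceil)+\P(T_L\ge \lceil y\rceil+1)$ for integer arguments). This already suggests the limit should be $2\,\P(\eta\ge r)$, but to make it rigorous and to capture the $\le$ becoming an equality in the limit, it is cleaner to pass through the continuous object.

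Next I would invoke Donsker's theorem: the rescaled, linearly interpolated walk $W^{(k)}(t):=T_{\lfloor L_k t\rfloor}/\sqrt{L_k}$, $t\in[0,1]$, converges in distribution in $C[0,1]$ to a standard Brownian motion $B$. The functional $w\mapsto \max_{0\le t\le 1} w(t)$ is continuous on $C[0,1]$, so by the continuous mapping theorem $\max_{0\le t\le 1} W^{(k)}(t)\dto \max_{0\le t\le 1} B(t)$. Since $\max_{1\le i\le L_k} T_i = \sqrt{L_k}\,\max_{0\le t\le 1} W^{(k)}(t)$ and $y_k/\sqrt{L_k}\to r$, and since the law of $\max_{0\le t\le 1}B(t)$ has no atom at $r>0$ (it is $|\,\mathcal N(0,1)\,|$ by the reflection principle, hence has a density), the Portmanteau theorem yields
\begin{align*}
\lim_{k\to\infty}\P\left(\max_{1\le i\le L_k} T_i\ge y_k\right)
=\P\left(\max_{0\le t\le 1} B(t)\ge r\right)
=\P(|\eta|\ge r)=2\,\P(\eta\ge r)=2-2\Phi(r),
\end{align*}
where in the last line I use the reflection principle for Brownian motion, $\P(\max_{[0,1]}B\ge r)=2\,\P(B(1)\ge r)$ for $r>0$. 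The condition $y_k/\sqrt{L_k}\to r$ with $r>0$ fixed is exactly what lets me replace the moving threshold $y_k/\sqrt{L_k}$ by the constant $r$ at the level of the limiting distribution, using that $r$ is a continuity point of the limiting CDF.

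The only mildly delicate point — the one I would flag as the main thing to get right rather than a genuine obstacle — is handling the threshold $y_k$ which varies with $k$: one should either sandwich $y_k/\sqrt{L_k}$ between $r\pm\eps$ and use continuity of the limit CDF at $r$, or appeal to the standard fact that convergence in distribution together with convergence of the thresholds to a continuity point implies convergence of the corresponding probabilities. An alternative, entirely elementary route that avoids Donsker altogether is to start from the reflection-principle identity $\P(\max_{1\le i\le L_k}T_i\ge y_k)=\P(T_{L_k}\ge \lceil y_k\rceil)+\P(T_{L_k}\ge\lceil y_k\rceil+1)$ and apply the ordinary (de Moivre–Laplace) central limit theorem to each term, since $\lceil y_k\rceil/\sqrt{L_k}\to r$ and $(\lceil y_k\rceil+1)/\sqrt{L_k}\to r$; both terms converge to $\P(\eta\ge r)=1-\Phi(r)$, giving the claim. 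I would present the Donsker argument as the main proof and perhaps remark that the elementary computation gives the same answer.
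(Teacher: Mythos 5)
Your argument is correct, but your main route differs from the paper's. The paper proves the lemma exactly along the lines of the ``entirely elementary'' alternative you sketch at the end: set $\tilde y_k=\lceil y_k\rceil$, use the reflection principle for the simple random walk to write
$\P\left(\max_{1\le i\le L_k}T_i\ge y_k\right)=2\,\P(T_{L_k}\ge \tilde y_k)-\P(T_{L_k}=\tilde y_k)$
(which is the same identity as your $\P(T_{L_k}\ge\tilde y_k)+\P(T_{L_k}\ge \tilde y_k+1)$), note that the atom term is $O(1/\sqrt{L_k})$, and apply the ordinary CLT together with $\tilde y_k/y_k\to 1$ to get $2\,\P(\eta\ge r)$. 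Your primary argument instead goes through Donsker's invariance principle, the continuous mapping theorem for the running maximum, and the reflection principle for Brownian motion, and you correctly identify and handle the one delicate point in that route, namely that the threshold $y_k/\sqrt{L_k}$ moves with $k$ and must converge to a continuity point of the law of $\max_{[0,1]}B$ (which holds since that law is that of $|\eta|$ and $r>0$). The Donsker route is heavier machinery but generalizes immediately to other path functionals; the paper's route is shorter, self-contained, and needs only de Moivre--Laplace. One cosmetic remark: you call $W^{(k)}$ the linearly interpolated walk but then define it with $\lfloor L_k t\rfloor$, which is the step-function version; this is harmless here because the maximum of either version over $[0,1]$ coincides with $\max_{0\le i\le L_k}T_i/\sqrt{L_k}$, and including $i=0$ does not affect the event since $T_0=0<y_k$ for large $k$.
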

\begin{proof}
Let $\tilde y_k=\lceil y_k\rceil\in\Z_+$. By the reflection principle, 
\begin{align*}
\P\left(\max_{1\le i\le L_k} T_i\ge y_k\right)
&=\P\left(\max_{1\le i\le L_k} T_i\ge \tilde y_k\right)
=2\,\P( T_{L_k}\ge \tilde y_k)-\P( T_{L_k}= \tilde y_k)
\\ &
=2\,\P\left( \frac{T_{L_k}}{\sqrt{L_k}}\ge \frac{\tilde y_k}{\sqrt{L_k}}\right)+O\left(\frac1{\sqrt{L_k}}\right)
\to 2\P(\eta\ge r)
\end{align*}
by the Central Limit Theorem, using also the fact that $\tilde y_k/y_k\to 1$.
\end{proof}

\section{Integer-valued {\bf a}-walks}
Suppose that the sequence ${\bf a}$ contains only integers.
\begin{prop}\label{prop:fourier}
Let $z\in\Z$. Suppose that the sequence
$$
\int_0^\pi \cos(tz)\, \prod_{k=1}^n \cos(t a_k)\dd t,\qquad n=1,2,\dots
$$
is summable. Then the events $\{S(n)=z\}$ occur for finitely many $n$ a.s.
\end{prop}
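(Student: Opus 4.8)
The plan is to compute $\P(S(n)=z)$ via Fourier inversion, bound the series $\sum_n \P(S(n)=z)$, and then invoke the first Borel–Cantelli lemma. Since each $X_k\sim\sb$ and the $X_k$ are independent, the characteristic function of $S(n)$ is
\begin{align*}
\E\left[e^{itS(n)}\right]=\prod_{k=1}^n \E\left[e^{ita_kX_k}\right]=\prod_{k=1}^n \cos(ta_k),
\end{align*}
which is real-valued and $2\pi$-periodic in $t$ because every $a_k$ is an integer. As $S(n)$ is a $\Z$-valued random variable, the Fourier inversion formula for lattice distributions gives, for $z\in\Z$,
\begin{align*}
\P(S(n)=z)=\frac{1}{2\pi}\int_{-\pi}^{\pi} e^{-itz}\prod_{k=1}^n \cos(ta_k)\dd t
=\frac{1}{\pi}\int_{0}^{\pi} \cos(tz)\prod_{k=1}^n \cos(ta_k)\dd t,
\end{align*}
where the last equality uses that the integrand is even in $t$ (both $\cos(tz)$ and each $\cos(ta_k)$ are even, and the imaginary part $\sin(tz)\prod_k\cos(ta_k)$ is odd, so it integrates to zero over $[-\pi,\pi]$).

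Given this identity, the hypothesis that the sequence $\int_0^\pi \cos(tz)\prod_{k=1}^n\cos(ta_k)\dd t$ is summable says precisely that $\sum_{n=1}^\infty \P(S(n)=z)<\infty$. By the Borel–Cantelli lemma, the event $\{S(n)=z\}$ occurs for only finitely many $n$ almost surely, which is the claim.

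I do not expect a genuine obstacle here — the only point requiring a little care is the justification of the Fourier inversion step and the reduction from $\int_{-\pi}^\pi$ to $\int_0^\pi$, which rests on the integrand being even and on the integral being absolutely convergent (it is, since the integrand is continuous and bounded by $1$ on the compact interval $[-\pi,\pi]$). One should also note that summability of the stated sequence forces its terms to tend to $0$, consistent with $\P(S(n)=z)\to 0$; this is not needed for the proof but is a useful sanity check. Everything else is a direct application of standard lattice Fourier inversion and Borel–Cantelli.
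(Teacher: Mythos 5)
Your argument is essentially identical to the paper's: Fourier inversion for the $\Z$-valued variable $S(n)$ gives $\P(S(n)=z)=\frac1\pi\int_0^\pi \cos(tz)\prod_{k=1}^n\cos(ta_k)\,\mathrm{d}t$, so the summability hypothesis is exactly $\sum_n\P(S(n)=z)<\infty$ and Borel--Cantelli concludes. The paper merely spells out the inversion via the finite sum $\E e^{itS(n)}=\sum_k e^{itk}\P(S(n)=k)$ and orthogonality of $e^{it(k-z)}$ on $[-\pi,\pi]$, which is the same justification you sketch.
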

\begin{proof}
The result follows from standard Fourier analysis. Indeed,
$$
\E e^{it S(n)}=\sum_{k\in\Z} e^{itk} \P(S(n)=k)
$$
where the sum above goes, in fact, effectively over a finite number of $k$s (as $|S(n)|\le a_1+\dots+a_n$). At the same time,
$$
\int_{-\pi}^\pi  e^{it(k-z)}\dd t=\begin{cases}
2\pi, &\text{if } k=z;\\
0, &\text{if } k\in\Z\setminus\{z\}.
\end{cases}
$$
By changing the order of summation and integration, we obtain
$$
\frac 1{2\pi}\int_{-\pi}^\pi  \E e^{it (S(n)-z)}\dd t
=\frac1{2\pi}\sum_{k\in\Z} \int_{-\pi}^\pi e^{it(k-z)}\, \P(S(n)=k)\, \dd t=\P(S(n)=z).
$$
On the other hand,
$$
\frac 1{2\pi}\int_{-\pi}^\pi   \E e^{it (S(n)-z)}\dd t=
\frac 1{2\pi}\int_{-\pi}^\pi  e^{-itz}\,\prod_{k=1}^n \E e^{it a_k X_k}\dd t=\frac 1{\pi}\int_0^\pi \cos(tz)\, \prod_{k=1}^n \cos(t a_k)\dd t
$$
by the symmetry of $\cos$ and the fact that the imaginary part must equal zero. Now the result follows from the Borel-Cantelli lemma, since $\sum_n \P(S(n)=z)<\infty$.
\end{proof}
\begin{corollary}
Suppose that the sequence
$$
\int_0^\pi \left|\prod_{k=1}^n \cos(t a_k)\right|\dd t,\qquad n=1,2,\dots
$$
is summable. Then the {\bf a}-walk is transient a.s.
\end{corollary}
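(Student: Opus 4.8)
The plan is to deduce this corollary from Proposition~\ref{prop:fourier} by showing that summability of $\int_0^\pi \bigl|\prod_{k=1}^n \cos(t a_k)\bigr|\dd t$ forces, simultaneously for every $z\in\Z$, the summability of $\int_0^\pi \cos(tz)\prod_{k=1}^n \cos(t a_k)\dd t$, and then observing that ``transient'' is exactly the statement that $\{S(n)=z\}$ occurs finitely often for all integers $z$ (since the walk is integer-valued, being $C$-recurrent for some $C\ge 0$ is the same as hitting some fixed $z$ with $|z|\le C$ infinitely often, and there are only finitely many such $z$).

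First I would note the pointwise bound $\bigl|\cos(tz)\prod_{k=1}^n \cos(t a_k)\bigr|\le \bigl|\prod_{k=1}^n \cos(t a_k)\bigr|$, valid for all $t$ and all $z\in\Z$, so that
\[
\left|\int_0^\pi \cos(tz)\,\prod_{k=1}^n \cos(t a_k)\dd t\right|\le \int_0^\pi \left|\prod_{k=1}^n \cos(t a_k)\right|\dd t .
\]
Summing over $n$ and using the hypothesis shows that the sequence in Proposition~\ref{prop:fourier} is absolutely summable for \emph{every} fixed $z\in\Z$. Hence, for each $z$, the events $\{S(n)=z\}$ occur for only finitely many $n$ almost surely; taking the countable union over $z\in\Z$ of these null events, we get that almost surely, for every $z$, $\{S(n)=z\}$ happens finitely often.

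Next I would translate this into transience in the sense of Definition~\ref{def1}. Fix $C\ge 0$. On the event that $S$ is $C$-recurrent, $|S(n)|\le C$ for infinitely many $n$; since $S(n)\in\Z$ always, by pigeonhole at least one integer $z$ with $|z|\le C$ is attained infinitely often. But we have just shown this has probability zero. Since $C$-recurrence is monotone in $C$, it suffices to rule it out along $C=1,2,3,\dots$, a countable family, so almost surely the walk is not $C$-recurrent for any $C\ge 0$, i.e.\ it is transient.

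There is no serious obstacle here; the only point requiring a word of care is the exchange of ``finitely often for each $z$'' (which is what Borel--Cantelli gives, $z$ by $z$) with ``finitely often for all $z$ simultaneously'', which is handled by the countability of $\Z$, together with the reduction of $C$-recurrence over all real $C\ge 0$ to the countable subfamily of integer thresholds via monotonicity. Everything else is the elementary bound $|\cos|\le 1$.
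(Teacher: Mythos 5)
Your proposal is correct and follows essentially the same route as the paper: bound the integral with $\cos(tz)$ by the integral of $\bigl|\prod_{k=1}^n\cos(ta_k)\bigr|$, invoke Proposition~\ref{prop:fourier} for each fixed $z\in\Z$, and then use the fact that $[-C,C]$ contains only finitely many integers (plus countability in $z$ and $C$) to rule out $C$-recurrence for every $C$. The only cosmetic difference is that the paper writes the first step via the identity $\pi\,\P(S(n)=z)=\int_0^\pi\cos(tz)\prod_{k=1}^n\cos(ta_k)\,\rmd t$ from the proposition's proof, which is equivalent to your use of the proposition as a black box.
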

\begin{proof}
From Proposition~\ref{prop:fourier} we know that for each $z\in\Z$
$$
\pi \P(S(n)=z)=\int_0^\pi \cos(tz)\, \prod_{k=1}^n \cos(t a_k)\dd t\le 
\int_0^\pi \left|\cos(tz)\, \prod_{k=1}^n \cos(t a_k)\right|\dd t
\le \int_0^\pi \left|\prod_{k=1}^n \cos(t a_k)\right|\dd t.
$$
Hence the event $\{S(n)=z\}$ occurs finitely often a.s.\ for each $z$. Since for each $C>0$ there are only finitely many integers in $[-C,C]$ we conclude that the walk is not $C$-recurrent a.s.\ for every $C$.
\end{proof}

An interesting and quite natural example is when ${\bf a}=(1,2,3,\dots)$, i.e., $a_i=i$. It was previously published in the IMS Bulletin~\cite{IMS}, in the Student Puzzle Corner no.~37.
\begin{thm}\label{thm:puzzle}
The {\bf a}-walk with ${\bf a}=(1,2,3,\dots)$ is a.s.\ transient.
\end{thm}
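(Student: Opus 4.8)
The plan is to deduce a.s.\ transience from the Corollary to Proposition~\ref{prop:fourier}: since the coefficients $a_k=k$ are integers, it suffices to show that $I_n:=\int_0^\pi\bigl|\prod_{k=1}^n\cos(kt)\bigr|\dd t$ is summable, and I would prove the (essentially sharp) bound $I_n=O(n^{-3/2})$, which is summable. The estimate of $I_n$ I would obtain by splitting $[0,\pi]$ into shrinking neighbourhoods of $0$ and $\pi$ and a ``middle'' part.

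Near $0$ I would use the elementary inequality $|\cos x|\le e^{-x^2/2}$ valid for $|x|\le\pi/2$. If $t\le\pi/(2n)$ then every $kt$ lies in $[0,\pi/2]$, so $\prod_{k=1}^n|\cos(kt)|\le\exp\bigl(-\tfrac{t^2}2\sum_{k\le n}k^2\bigr)\le e^{-cn^3t^2}$, whose integral over $[0,\pi/(2n)]$ is $O(n^{-3/2})$. If $\pi/(2n)\le t\le n^{-1/2}$, applying the same inequality to the first $\lfloor\pi/(2t)\rfloor\le n$ factors gives $\prod_{k=1}^n|\cos(kt)|\le e^{-c'/t}\le e^{-c'\sqrt n}\le n^{-2}$ once $n$ is large, so this strip too contributes $O(n^{-3/2})$. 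The neighbourhood $[\pi-n^{-1/2},\pi]$ is handled identically via the symmetry $t\mapsto\pi-t$, under which $\prod_{k=1}^n|\cos(kt)|$ is unchanged (as $\cos(k(\pi-t))=(-1)^k\cos(kt)$). It then remains to prove the uniform estimate $\prod_{k=1}^n|\cos(kt)|\le n^{-2}$ for all $t\in[n^{-1/2},\pi-n^{-1/2}]$ and all large $n$.

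For this middle range I would argue Diophantinely. Fix such a $t$; by Dirichlet's approximation theorem there are coprime integers $p,q$ with $1\le q\le n$ and $|t-p\pi/q|<\pi/(qn)$, and necessarily $q\ge 2$ because $t$ is at distance $\ge n^{-1/2}$ from $\{0,\pi\}$. Put $B=\min(\lfloor n/q\rfloor,\lfloor\sqrt n\,\rfloor)$ and $m=qB\le n$, and split $\{1,\dots,m\}$ into $B$ blocks of $q$ consecutive integers. Writing $kt=kp\pi/q+k(t-p\pi/q)$ gives $|\cos(kt)|=|\cos(\rho_k\pi/q+\varepsilon_k)|$ with $\rho_k=kp\bmod q$ and $|\varepsilon_k|<m\pi/(qn)=B\pi/n\le\pi/\sqrt n$; inside each block the residues $\rho_k$ run through $0,1,\dots,q-1$. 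Hence, for $n$ large, a fixed positive proportion $c$ of the indices $k$ in every block satisfy $\rho_k\pi/q+\varepsilon_k\in[\pi/8,7\pi/8]$ and thus $|\cos(kt)|\le\theta:=\cos(\pi/8)<1$, so each block contributes a factor $\le\theta^{\,cq}$. Multiplying, $\prod_{k=1}^n|\cos(kt)|\le\prod_{k=1}^m|\cos(kt)|\le\theta^{\,cqB}\le\theta^{\,c\sqrt n}$ (using $qB\ge\sqrt n$), which is below $n^{-2}$ for $n$ large. Combining the three regions yields $I_n=O(n^{-3/2})$, hence $\sum_nI_n<\infty$ and the walk is a.s.\ transient.

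The one genuine obstacle is the uniform middle-range bound. Crude local estimates — e.g.\ that among any two consecutive factors one is $\le1-\delta$ — are too weak there, because for $t$ just inside the middle the product stays close to $1$ for a while and such bounds only give $(1-\delta)^n$, which may tend to $1$. What makes the argument go through is capping the number of blocks at $\sqrt n$: this forces the Diophantine perturbation $\varepsilon_k$ to be uniformly $O(1/\sqrt n)$, so the arithmetic‑progression structure of the phases $\{kt\bmod\pi\}$ persists for every denominator $q\in\{2,\dots,n\}$ simultaneously and one avoids any case analysis near the zeros of $\cos$. (An alternative that bypasses Proposition~\ref{prop:fourier} altogether would be to establish directly a Littlewood--Offord-type concentration bound $\sup_z\P(S(n)=z)=O(n^{-3/2})$ for the distinct coefficients $1,2,\dots,n$ and then apply the first Borel--Cantelli lemma, since $\sum_n n^{-3/2}<\infty$.)
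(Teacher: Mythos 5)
Your proposal is correct, but it takes a genuinely different route from the paper's own proof. The paper's argument is enumerative: it quotes Sullivan's asymptotics~\cite{SUL} for the number $Q_n$ of sign choices with $\pm1\pm2\pm\dots\pm n=0$, giving $\P(S(n)=0)\sim \mathrm{const}\cdot n^{-3/2}$, applies Borel--Cantelli to rule out recurrence at $0$, and then upgrades this to transience by a separate step: if $\{S(n)=m\}$ occurred infinitely often, then forcing the next $2|m|$ signs to alternate (probability $\ge 2^{-2|m|}$ each time) would make $\{S(n)=0\}$ occur infinitely often, a contradiction. You instead go through the Fourier route of the Corollary to Proposition~\ref{prop:fourier} and prove the uniform characteristic-function bound $\int_0^\pi\prod_{k\le n}|\cos(kt)|\,\dd t=O(n^{-3/2})$ from scratch: Gaussian-type bounds $|\cos u|\le e^{-u^2/2}$ near $t=0$ (and near $t=\pi$ by the symmetry $\cos(k(\pi-t))=(-1)^k\cos(kt)$), and in the middle range a Dirichlet-approximation argument in which capping the number of $q$-blocks at $\sqrt n$ keeps the perturbations $\varepsilon_k$ of size $O(n^{-1/2})$, so that equidistribution of $kp\bmod q$ within each block yields a fixed proportion of factors bounded by $\cos(\pi/8)$, hence a bound $\theta^{c\sqrt n}$ uniformly over $t\in[n^{-1/2},\pi-n^{-1/2}]$; I checked the details (the case $q\ge2$, the inequality $qB\ge\sqrt n$, and the residue-counting uniformly in $q$) and they hold. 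What each approach buys: the paper's proof is short because the hard combinatorial input is outsourced to~\cite{SUL}, but it only controls the level $0$ directly and needs the alternating-sign trick to exclude $C$-recurrence; your proof is self-contained, bounds $\sup_z\P(S(n)=z)$ at once (so all integer levels, hence all $C$, are handled by Borel--Cantelli with no extra step), and is in fact a hands-on special case of the stronger concentration results behind Theorem~\ref{thm:SAR} (S\'ark\"ozy--Szemer\'edi) and close in spirit to the integral estimate of Lemma~\ref{lemSUL} in Appendix~2, except that there the middle range is treated by AM--GM and cosine sums rather than by your Diophantine block argument.
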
 
This statement follows from a much stronger Theorem~\ref{thm:SAR}, but for the sake of completeness, we present its self-contained proof.
\begin{proof}[Proof of Theorem~\ref{thm:puzzle}]
Let $A_n=\{S(n)=0\}=\{X_1+2X_2+\dots+nX_n=0\}.$ Then
$\P(A_n)=Q_n/2^n$, where 
$$
Q_n=\text{number of ways to put $\pm$ in the sequence $*1*2*3*\dots *n$ such that the sum equals $0$}.
$$
For example, $Q_1=Q_2=0$, $Q_3=Q_4=2$, $Q_5=Q_6=0$, $Q_7=8$, $Q_8=14$, etc.
It was essentially shown in~\cite{SUL} that 
$$
Q_n\sim \sqrt{\frac{6}{\pi}}\, \frac{2^n}{n^{3/2}}
\qquad\text{ when }n\mod 4\in\{0,3\}
$$
(and zero otherwise) as $n\to\infty$, meaning that the ratio of the RHS and the LHS converges to one.
Consequently, $\sum_n \P(A_n) \sim   \sum_n \frac{\rm const}{n^{3/2}}<\infty$ and the events $A_n$ occur a.s.\ finitely often by the Borel-Cantelli lemma. Hence the walk is a.s.\ not recurrent. 

Moreover, since for any $m\in\Z$
$$
\P(S(n+2|m|)=S(n)-m\mid \F_n)\ge  \frac{1}{2^{2m}}
$$
(by making the signs of $X_{n+1},X_{n+2},\dots,X_{n+2|m|}$ alternate), we conclude that if the event $\{S(n)=m\}$ occurs infinitely often, then $A_n$ shall also occur  infinitely often a.s., leading to contradiction. As a result,  $\P(\{S(n)=m\}\text{ i.o.})=0$ for all integer $m$s, and thus the walk is a.s.\ not $C$-recurrent for any non-negative $C$.
\end{proof}

\begin{rema} Though the $(1,2,3,\dots)$-walk is transient, it still can jump over zero infinitely many times, as it was shown by Theorem~\ref{th:saty}.
\end{rema}

In fact, Theorem~\ref{thm:puzzle} can be generalized greatly, using the  result from~\cite{SAR}, or even a weaker result of~\cite{ERD}, which provide the estimates for the maximum number of solutions of the equation $\sum_{i=1}^n \varepsilon_i a_i=t$ where $\epsilon_i\in\{0,1\}$ while $a_i$'s and $t$ are all integers.

\begin{thm}\label{thm:SAR}
Let ${\bf a}$ be such that all $a_i$'s are distinct integers. Then {\bf a}-walk  is a.s.\ transient.
\end{thm}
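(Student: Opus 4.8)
The plan is to reduce the statement to the Borel--Cantelli argument already used for Theorem~\ref{thm:puzzle}, with the combinatorial input supplied by the result of~\cite{SAR} (or~\cite{ERD}) on the number of $\{0,1\}$-representations of an integer. Write $A_n^{(z)}=\{S(n)=z\}$ and observe $\P(A_n^{(z)})=R_n(z)/2^n$, where $R_n(z)$ counts the choices of signs $\eps_i\in\{-1,+1\}$ with $\sum_{i=1}^n \eps_i a_i=z$. The first step is the elementary bijection between sign-representations and subset-sums: if $\sigma_n=a_1+\dots+a_n$, then a sign vector $\eps$ with $\sum \eps_i a_i=z$ corresponds exactly to a subset $I\subseteq\{1,\dots,n\}$ (the indices with $\eps_i=+1$) satisfying $\sum_{i\in I}a_i=(z+\sigma_n)/2$. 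Hence $R_n(z)$ equals the number of subsets of $\{a_1,\dots,a_n\}$ summing to the integer $(z+\sigma_n)/2$, and in particular $R_n(z)\le \max_{t\in\Z} r_n(t)$, where $r_n(t)$ is the maximal number of subsets of a set of $n$ \emph{distinct} positive integers with a prescribed sum.

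The second step is to quote the bound from~\cite{SAR}: for any set of $n$ distinct integers, the number of subsets with a given sum is $O\!\left(2^n/n^{3/2}\right)$; the weaker~\cite{ERD} bound $O\!\left(2^n/\sqrt n\right)$ would already be more than enough. Either way, $\P(A_n^{(z)})\le \mathrm{const}/n^{3/2}$ (resp.\ $\mathrm{const}/\sqrt n$) uniformly in $z$. If we have the $n^{-3/2}$ bound, then $\sum_n \P(A_n^{(0)})<\infty$ and Borel--Cantelli gives that $S(n)=0$ for only finitely many $n$, a.s.; moreover, since the distinct-integers hypothesis forces $a_i\ge i$ eventually (more precisely $a_i\to\infty$, as there can be only finitely many $a_i$ in any bounded set), the ``alternating-signs'' trick from the proof of Theorem~\ref{thm:puzzle} shows that $\{S(n)=m\}$ i.o.\ implies $\{S(n)=0\}$ i.o., so the walk is a.s.\ not $C$-recurrent for any $C$.

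The third step handles the case where one only has the $n^{-1/2}$ bound, for which $\sum_n n^{-1/2}$ diverges, so the crude first--moment argument fails for a single level $z$. Here one passes directly to $C$-recurrence: fix $C\ge0$ and, for each fixed $z\in[-C,C]\cap\Z$, note that $\{S(n)=z\text{ i.o.}\}$ would, by the alternating-signs coupling (cost $2^{-2|z|}$ per block of length $2|z|$), force $S(n)=0$ i.o.\ along a positive-density subsequence; then apply the argument of the previous paragraph. Alternatively, and more cleanly, one simply cites Theorem~\ref{thm:puzzle}'s template verbatim once the $n^{-3/2}$ estimate of~\cite{SAR} is in hand, since that estimate is exactly the content needed. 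I expect the only real issue to be bookkeeping: verifying that the cited estimate of~\cite{SAR}/\cite{ERD} is stated for \emph{arbitrary} distinct integers (not just $\{1,\dots,n\}$) and is uniform over the target $t$, and confirming that $a_i\to\infty$ so that the alternating-signs step applies; both are routine, so the substance of the proof is really the reduction in the first step plus the black-box counting bound.
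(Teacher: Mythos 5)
Your main route is the paper's own: after the elementary bijection between $\pm1$-representations and subset sums (which the paper uses implicitly when quoting \cite{SAR} in the $\pm1$ form), the S\'ark\"ozi--Szemer\'edi bound gives $\P(S(n)=z)\le \mathrm{const}\cdot n^{-3/2}$ uniformly in $z\in\Z$ for large $n$; Borel--Cantelli then kills each fixed integer level, and since $S(n)$ is integer-valued and only finitely many integers lie in $[-C,C]$, the walk is a.s.\ not $C$-recurrent for any $C$. That is exactly the published argument, and your bookkeeping caveats (uniformity in the target, validity for arbitrary distinct integers) are the right things to check.

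However, two of your auxiliary steps are genuinely flawed, though fortunately dispensable. The alternating-signs transfer from $\{S(n)=m\}$ i.o.\ to $\{S(n)=0\}$ i.o.\ is specific to $a_i=i$: it relies on consecutive step sizes differing by exactly $1$, so that one $\mp$ pair moves $S$ by exactly $\pm1$ at cost $1/4$; for arbitrary distinct integers the gaps $a_{i+1}-a_i$ are uncontrolled (and may grow), so no bounded-cost coupling shifts the walk by exactly $m$, and this step fails. It is also unnecessary: because the \cite{SAR} bound is uniform in the target, you simply apply Borel--Cantelli separately to each of the finitely many integer levels in $[-C,C]$, which is what the paper does. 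Relatedly, your remark that the weaker $O(2^n/\sqrt n)$ bound of \cite{ERD} ``would already be more than enough'' is not correct --- $\sum_n n^{-1/2}$ diverges, as you yourself concede in your third step --- and that third-step workaround both inherits the invalid alternating-signs coupling and cannot turn a divergent first-moment bound into a convergent one. Strike these two detours and keep your clean route; it then coincides with the paper's proof.
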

\begin{proof}
The main result of~\cite{SAR} implies that for any $\epsilon>0$
$$
\mathrm{card}(\{(x_1,x_2,\dots, x_n): \text{ all } x_i=\pm 1,\ 
a_1 x_1+\dots+a_n x_n=m\})\le \frac{(1+\epsilon)2^{n+3}}{n^{3/2}\sqrt{\pi}}
$$
for all $n\ge n_0(\epsilon)$ and all $m$. Setting $\epsilon=1$, and fixing $m\in\Z$, we obtain that
$$
\sum_{n=n_0(1)}^\infty \P(S(n)=m)\le 
\sum_{n=n_0(1)}^\infty
\frac{2\cdot 2^{n+3}}{n^{3/2}\sqrt{\pi}}\times \frac 1{2^n}
=\frac{16}{\sqrt\pi}\sum_{n=n_0(1)}^\infty
\frac{1}{n^{3/2}}
<\infty.
$$
Therefore, by the Borel-Cantelli lemma, only finitely many events $\{S(n)=m\}$ occur a.s. Since $S(n)$ takes only integer values, this implies that $\{|S(n)|\le C\}$ happens finitely often a.s.\ for any $C>0$.
\end{proof}
\begin{rema}
${}$\\ \noindent
(a)  It is not difficult to see that under the condition of Theorem~\ref{thm:SAR} it suffices that all $a_k$'s are distinct only starting from some $k_0\ge 1$.

\noindent
(b) If $a_k=\lfloor k^\b\rfloor$ with $\b\ge 1$, then we immediately have a.s.\ transience by Theorem~\ref{thm:SAR}.
\end{rema}

\section{A non-trivial recurrent example}
We assume here that ${\bf a}=(B_1,B_2,B_3,\dots)$ where each $B_k$ is a consecutive block of $k$'s of length precisely~$L_k\ge 1$ . Denote also by $i_k=1+L_1+L_2+\dots+L_{k-1}$ the index of the first element of the $k-$th block.
For example, if $L_k=2^k$, then  $i_k=2^k-1$ and
$$
{\bf a}=(1,1,\ \underbrace{2,2,2,2}_{L_2\text{ times}},\ \underbrace{3,3,3,3,3,3,3,3}_{L_3\text{ times}},\underbrace{4,\dots,4}_{L_4\text{ times}},\dots),
$$
one can also notice that $a_i=\lfloor \log_2 (i+1)\rfloor=\lceil \log_2 (i+2)\rceil-1$.
\begin{thm}\label{th:logint}
Suppose that for some $\eps>0$, $r>0$, and $ k_0$ we have
\begin{align}\label{tcond}
\begin{split}
\frac{L_k}{L_1+L_2+\dots+L_{k'}}&\ge (2+\eps)\ln k;\\
\frac{L_k}{L_{k'+1}+L_{k'+2}+\dots+L_{k-1}}&\ge 2r;\\
L_k\ge  k^4,
\end{split}
\end{align}
whenever $k-k'\ge \frac{k}{\ln k} -2 $ and $k,k'\ge k_0$.
Then the ${\bf a}$-walk described above is a.s.\ recurrent.
\end{thm}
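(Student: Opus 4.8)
The plan is to show that with positive probability the walk returns to $0$ infinitely often at the ends of blocks, and then upgrade to probability one via a zero–one argument. Concretely, let $\sigma_k = S(i_{k+1}-1)$ denote the value of the walk at the end of the $k$-th block, and let $\Delta_k = \sigma_k - \sigma_{k-1}$ be the contribution of the $k$-th block. Since the $k$-th block consists of $L_k$ copies of the integer $k$, we have $\Delta_k = k\,(Y_1+\dots+Y_{L_k})$ where the $Y_j$ are fresh Rademacher variables, so $\Delta_k$ is $k$ times a simple-random-walk increment over $L_k$ steps. The heart of the argument is: conditionally on the history $\F_{i_k-1}$ (hence on $\sigma_{k-1}$), with probability bounded below by roughly $c/k^{?}$ — but in fact we want this not-too-small — the block can be steered so that $S$ hits $0$ at some time inside block $k$. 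I would first establish a one-block hitting estimate: given $|\sigma_{k-1}| = A$, the partial sums within block $k$ move in steps of size $k$, and by Lemma~\ref{lem:CLT} (applied with $L_k$ steps and target $A/k$) the probability that the within-block SRW reaches the level $-\sigma_{k-1}/k$ (so that $S$ passes through $0$; since $S$ changes by $\pm k$ each step inside the block and $a_k=k$ divides... well, $0$ lies on the lattice $\sigma_{k-1}+k\Z$ iff $k\mid\sigma_{k-1}$, which one arranges or handles by landing in $[-C,C]$ with $C$ fixed) is bounded below. The condition $L_k/(L_1+\dots+L_{k'}) \ge (2+\eps)\ln k$ is exactly what forces $A = |\sigma_{k-1}|$ to be small relative to $k\sqrt{L_k}$ with overwhelming probability.

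The key steps, in order, would be: (1) Use Lemma~\ref{lem:AH} to control $|\sigma_{k-1}|$. Writing $\sigma_{k-1} = \sum_{j<k} j(Y^{(j)}_1+\dots+Y^{(j)}_{L_j})$, we have $\Var \sigma_{k-1} = \sum_{j<k} j^2 L_j$. The first displayed condition in \eqref{tcond}, combined with $L_k \ge k^4$, should give that $k^2\sqrt{L_k}$ dominates $\sqrt{\Var\sigma_{k-1}}$ by a factor growing like $\sqrt{\ln k}$... actually the point is more subtle: we want $|\sigma_{k-1}| \le \theta\, k\sqrt{L_k}$ for small $\theta$ with probability $1 - O(k^{-(1+\delta)})$, so that by Borel–Cantelli this holds for all large $k$ a.s.; Azuma with $A = \theta k\sqrt{L_k}$ gives failure probability $\le 2\exp(-\theta^2 k^2 L_k / (2\sum_{j<k} j^2 L_j))$, and since $\sum_{j<k} j^2 L_j \le k^2 \sum_{j<k} L_j \le k^2 \cdot L_k/((2+\eps)\ln k)$ (using the first condition with $k'=k-1$, valid once $k-k' = 1 \ge k/\ln k - 2$, i.e. for all large $k$), the exponent is $\le -\theta^2 (2+\eps)\ln k / 2$, giving failure probability $O(k^{-\theta^2(2+\eps)/2})$, which is summable once $\theta$ is close enough to $1$ — in particular $\theta=1$ works. (2) On the event that $|\sigma_{k-1}|$ is small, show the within-block hitting probability is bounded below by a constant $p_0>0$, uniformly in $k$, via Lemma~\ref{lem:CLT}: the target level is $O(\sqrt{L_k})$ away in SRW units, matching the scaling $y_k/\sqrt{L_k}\to r'$ for some finite $r'$ (here the hypothesis $L_k/(L_{k'+1}+\dots+L_{k-1}) \ge 2r$ on intermediate scales, and the exact constant $r$, enter to control the relevant ratio). (3) Conditional Borel–Cantelli / Lévy's extension of Borel–Cantelli: since at each block $k$ the conditional probability of "$S$ hits $[-C,C]$ during block $k$" is $\ge p_0$ on a set of full asymptotic probability, the event happens infinitely often a.s.

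The main obstacle, I expect, is step (2) — making the lower bound on the within-block hitting probability genuinely uniform in $k$ and rigorously tied to the reflection-principle/CLT estimate of Lemma~\ref{lem:CLT}. Lemma~\ref{lem:CLT} is an asymptotic statement about $\max_{i\le L_k} T_i \ge y_k$ when $y_k/\sqrt{L_k}\to r$; here the "target'' $y_k \approx \sigma_{k-1}/k$ is itself random (though small), so one must either (a) condition on $\sigma_{k-1}$ and note the bound $2 - 2\Phi(r)$ is continuous and bounded below as long as $|\sigma_{k-1}|/(k\sqrt{L_k})$ stays below a fixed threshold, or (b) first pass through an intermediate deterministic level. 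A second technical point is the lattice/divisibility issue: $S$ within block $k$ lives on $\sigma_{k-1} + k\Z$, so it can only equal $0$ if $k \mid \sigma_{k-1}$; this is why the theorem only claims $C$-recurrence-style ``recurrence'' via hitting a bounded set — one steers $S$ to land in $[-C,C]$ (any fixed $C \ge 0$; in fact $C=0$ can be arranged because at the very end of block $k$ the walk has made $L_k$ steps of size $k$ and one can use that $a$ takes value $k{+}1$ at the start of block $k{+}1$, or simply that infinitely many blocks have $k \mid \sigma_{k-1}$), and then appeal to Theorem~\ref{th:saty}-type reasoning or a further steering step across the block boundary to reach exactly $0$. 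I would handle the divisibility by noting that whenever the walk enters $[-C,C]$ at a block boundary with $C$ large enough, a bounded number of additional steps in the next block (whose increment $k{+}1$ and the block-$k$ value $k$ are coprime) suffice to hit $0$ exactly, each such maneuver having probability bounded below; combined with the i.o. conclusion this forces $S(n)=0$ i.o. as well, giving genuine recurrence.
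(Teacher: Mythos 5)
Your outline reproduces the skeleton of the paper's argument (Azuma control of the pre-block position, a CLT/reflection hitting estimate inside a block, a conditional Borel--Cantelli step), but two essential points are wrong, and they are exactly the two ideas the proof hinges on. First, you invoke the first inequality of \eqref{tcond} with $k'=k-1$, claiming it is ``valid once $k-k'=1\ge k/\ln k-2$, i.e.\ for all large $k$''. This is backwards: $k/\ln k-2\to\infty$, so the hypothesis is assumed only when $k$ and $k'$ are \emph{far apart}, and it asserts nothing about consecutive blocks. Indeed, in the motivating example $a_i=\lfloor\log_\gamma i\rfloor$ one has $L_k\asymp\gamma^k$ and $L_1+\dots+L_{k-1}\asymp\gamma^k$, so $L_k/(L_1+\dots+L_{k-1})$ stays bounded and is certainly not $\ge(2+\eps)\ln k$; your bound $\sum_{j<k}L_j\le L_k/((2+\eps)\ln k)$ is false there, and with it your summable-failure estimate for $|\sigma_{k-1}|\le \theta k\sqrt{L_k}$ over \emph{all} $k$. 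The paper instead works along a sparse subsequence of blocks $k_m\approx m\ln m$ (chosen odd) and controls $\Sigma_1=S$ at the previously \emph{sampled} block $k_{m-1}$, which is precisely the widely-separated regime in which \eqref{tcond} is assumed.

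Second, your steps (2)--(3) assert a uniform constant lower bound $p_0$ for the conditional probability that $S$ hits $[-C,C]$ (or $0$) during block $k$. This cannot hold: inside block $k$ the walk moves on the coset $\sigma_{k-1}+k\Z$, so it can enter $[-C,C]$ only if $\sigma_{k-1}$ falls in one of $O(C)$ residue classes mod $k$, an event of probability $O(1/k)$; the true per-block success rate is $\Theta(1/k)$, not a constant. You flag this as a ``technical point'' and propose to steer from $[-C,C]$ to $0$ using the coprime step size $k+1$, but that neither removes the $O(1/k)$ entry cost nor produces an exact zero (from $x$ with $0<|x|\le C<k+1$ no bounded number of $\pm(k+1)$ steps reaches $0$), whereas the theorem claims genuine recurrence. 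The paper meets the obstruction head-on: it reserves the last $2k^2$ steps of block $k-1$ (step size $k-1\equiv-1\bmod k$) as a fine-tuning sum $\Sigma_3$, proves a local CLT mod $k$ (Lemma~\ref{lemLD}, Corollary~\ref{cordiv}) giving $\P(\mathsf{Div}_k\mid\cdot)\ge c_1/(2k)$, and only then applies the in-block hitting estimate; the resulting bound $\P(A_m\mid E_{m-1},\G_{m-1})\gtrsim 1/k_m\asymp 1/(m\ln m)$ is still non-summable thanks to the sparse choice of $k_m$, so Lemma~\ref{lem:likeBC} applies. The sparse sampling matched to the gap condition and the mod-$k$ fine-tuning with its $1/k$ price are the heart of the proof, and both are missing from (indeed contradicted by) your outline.
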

\begin{rema}
One can easily check that the conditions of the theorem are satisfied if $a_k=\left\lfloor (\log_{\g} k)^\beta \right\rfloor$, where $\g>1$ and $\beta\in (0,1]$.
\end{rema}

\begin{proof}[Proof of Theorem~\ref{th:logint}] We will proceed in FIVE steps.
\vskip 1mm
\noindent\underline{\bf Step 1: Preliminaries}

First, we need the following lemma, which is probably known.

\begin{lemma}\label{lemLD}
Let $m\in\Z_+$ and $T_m$ be a simple symmetric random walk on $\Z^1$, that is, $T_m=Y_1+\dots+Y_m$,  where $Y_i\sim\sb$ are i.i.d. There exists a universal constant $c_1>0$ such that for all integers $z$ such that $|z|\le 2\sqrt{m}$, assuming that  $m$ is sufficiently large and $m+z$ is even,
$$
\P(T_m=z)\ge \frac{c_1}{\sqrt m}.
$$
\end{lemma}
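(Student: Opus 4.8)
The plan is to reduce everything to the exact formula $\P(T_m=z)=\binom{m}{(m+z)/2}\,2^{-m}$, which holds precisely when $m+z$ is even (since $T_m=z$ forces exactly $(m+z)/2$ of the $Y_i$ to equal $+1$), and then to propagate a single-point lower bound outward using the ratio of consecutive binomial coefficients. By symmetry of $T_m$ it suffices to treat $0\le z\le 2\sqrt m$.

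First I would establish the ``anchor'' estimate. Let $z_0\in\{0,1\}$ have the same parity as $m$. Then $\P(T_m=z_0)=\binom{m}{\lfloor m/2\rfloor}\,2^{-m}$, and a single application of Stirling's formula gives $\binom{m}{\lfloor m/2\rfloor}\,2^{-m}\sim\sqrt{2/(\pi m)}$, so $\P(T_m=z_0)\ge\frac12\sqrt{2/(\pi m)}$ once $m$ is large. This is the only place Stirling enters, and only at one point, so there is no uniformity issue here.

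Next comes the propagation step. For $z\ge 0$ with $m+z$ even,
$$
\frac{\P(T_m=z+2)}{\P(T_m=z)}=\frac{\binom{m}{(m+z)/2+1}}{\binom{m}{(m+z)/2}}=\frac{(m-z)/2}{(m+z)/2+1}=\frac{m-z}{m+z+2}.
$$
For $0\le z\le 2\sqrt m$ this ratio is at least $\dfrac{m-2\sqrt m}{m+2\sqrt m+2}\ge 1-\dfrac{C}{\sqrt m}$ for a universal constant $C$ and all large $m$. Any such $z$ is reached from $z_0$ in at most $\sqrt m$ steps of size $2$, so telescoping the identity above yields
$$
\P(T_m=z)\ \ge\ \P(T_m=z_0)\left(1-\frac{C}{\sqrt m}\right)^{\!\sqrt m}\ \ge\ \tfrac12\,e^{-C}\,\P(T_m=z_0)
$$
for $m$ large, since $(1-C/\sqrt m)^{\sqrt m}\to e^{-C}$. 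Combining with the anchor bound gives $\P(T_m=z)\ge c_1/\sqrt m$ with, say, $c_1=\tfrac14 e^{-C}\sqrt{2/\pi}$, which is the claim.

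There is essentially no hard part here: the only thing to watch is that the window $|z|\le 2\sqrt m$ is of order $o(m)$, which is exactly what forces each of the $\le\sqrt m$ ratios to be $1-O(1/\sqrt m)$ and hence keeps their product bounded away from $0$ — a window with $|z|$ comparable to $m$ would destroy this. Alternatively one could invoke the de Moivre--Laplace local limit theorem to get $\P(T_m=z)=\sqrt{2/(\pi m)}\,e^{-z^2/(2m)}(1+o(1))$ uniformly on this range and then use $e^{-z^2/(2m)}\ge e^{-2}$, but the telescoping argument is more elementary and self-contained.
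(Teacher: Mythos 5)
Your proposal is correct and follows essentially the same route as the paper: both reduce $\P(T_m=z)$ to a binomial coefficient, anchor at the central coefficient $\binom{m}{\lfloor m/2\rfloor}2^{-m}\sim\sqrt{2/(\pi m)}$ via Stirling, and then show the deviation over the window $|z|\le 2\sqrt m$ costs only a product of at most order $\sqrt m$ factors of size $1-O(1/\sqrt m)$, hence a constant. Your telescoping of consecutive ratios $(m-z)/(m+z+2)$ is just a repackaging of the paper's factorization of $\tilde m!\,\tilde m!/\bigl(w!\,(m-w)!\bigr)$ into factors $1-\frac{w-\tilde m}{\tilde m+j}$, so no substantive difference.
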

\begin{proof}
 W.l.o.g.\ assume $z\ge 0$. We have
$$
\P(T_m=z)=\P\left(\frac{T_m+m}{2}=\frac{z+m}{2}\right)
=\P(\tilde T=w)
$$
where $\tilde T\sim {\rm Bin}(m,1/2)$ and $w=\frac{z+m}{2}\in\Z_+$. Note that $\tilde{m}\le w\le \tilde{m}+\sqrt m$ where $\tilde{m}=m/2$. So
\begin{align*}
\P(\tilde T=w)&=\binom{m}{w} 2^{-m}
=\binom{2\tilde{m}}{\tilde{m}}\frac 1{2^{2\tilde{m}}} \,
\frac{\tilde{m}!\, \tilde{m}!}{w!(m-w)!} 
=\frac{1+o(1)}{\sqrt{\pi \tilde{m}}}\,
\frac{(2\tilde{m}-w+1)(2\tilde{m}-w+2)\dots \tilde{m}}
{(\tilde{m}+1)(\tilde{m}+2)\dots w}
\\ &
=\frac{1+o(1)}{\sqrt{\pi \tilde{m}}}\,
\left(1-\frac{w-\tilde{m}}{\tilde{m}+1}\right)
\left(1-\frac{w-\tilde{m}}{\tilde{m}+2}\right)
\dots
\left(1-\frac{w-\tilde{m}}{w}\right)
\\ &
\ge 
\frac{1+o(1)}{\sqrt{\pi \tilde{m}}}\,
\left(1-\frac{\sqrt m}{\tilde{m}+1}\right)^{w-\tilde{m}}
\ge 
\frac{1+o(1)}{\sqrt{\pi \tilde{m}}}\,
\left(1-\frac{ \sqrt 2+o(1)}{\sqrt{\tilde{m}}}\right)^{\sqrt{2\tilde{m}}}
=\frac{e^{-2}+o(1)}{\sqrt{\pi m/2}}\ge \frac{0.1}{\sqrt{m}}
\end{align*}
for large enough $m$.
\end{proof}

\begin{corollary}\label{cordiv}
Let $T_m$, $m=0,1,2,\dots$, be as simple symmetric random walk as in Lemma~\ref{lemLD}.  Assume that $m$ and $k$ are positive integers such that  $k^2\le m$. Let $u\in \Z$, and either $k$ is odd, or both $k$ and $m-u$ are even.
Then for large $k$s
$$
\P(T_m-u \mod k=0)\ge \frac{c_1}{2k}
$$
where $c_1$ is the constant from Lemma~\ref{lemLD}.
\end{corollary}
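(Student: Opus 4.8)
The plan is to split off the last $k^2$ steps of the walk and to show that, over those steps alone, the walk lands in any prescribed residue class modulo $k$ with probability of order $1/k$.

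Using the hypothesis $k^2\le m$, write $T_m=T_{m-k^2}+R$, where $R:=T_m-T_{m-k^2}=Y_{m-k^2+1}+\dots+Y_m$ is independent of $\F_{m-k^2}$ and has the law of $T_{k^2}$. Conditioning on $T_{m-k^2}$ gives
$$
\P\big(T_m\equiv u\mod k\big)=\E\Big[\,\P\big(R\equiv u-T_{m-k^2}\mod k\ \big|\ \F_{m-k^2}\big)\Big]=\E\big[\,g(T_{m-k^2})\big],
$$
where $g(s):=\P\big(T_{k^2}\equiv u-s\mod k\big)$, the second equality using the independence of $R$ and $\F_{m-k^2}$. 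Hence it suffices to prove $g(s)\ge c_1/(2k)$ for every $s$ in the support of $T_{m-k^2}$, that is, for every integer $s$ with $s\equiv m-k^2\mod 2$.

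Fix such an $s$ and let $v:=(u-s)\bmod k$. I would bound $g(s)=\P(T_{k^2}\equiv v\mod k)$ from below by keeping only the endpoints $z\in[-2k,2k]$ with $z\equiv v\mod k$ and $z\equiv k^2\mod 2$ (the parity that $T_{k^2}$ necessarily has); for each such $z$, Lemma~\ref{lemLD} applied with ``$m$''$=k^2$ --- valid since $|z|\le 2k=2\sqrt{k^2}$ and $k$, hence $k^2$, is large --- yields $\P(T_{k^2}=z)\ge c_1/\sqrt{k^2}=c_1/k$. So it is enough that at least one such $z$ exists, which already gives $g(s)\ge c_1/k\ge c_1/(2k)$. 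Now the $4k+1$ consecutive integers in $[-2k,2k]$ contain at least four that are $\equiv v\mod k$. If $k$ is odd these four alternate in parity, so at least two of them also satisfy $z\equiv k\equiv k^2\mod 2$, and we are done. If $k$ is even, all integers $\equiv v\mod k$ share the parity of $v$, so a suitable $z$ exists precisely when $v$ is even, i.e. when $u-s$ is even; but $s\equiv m-k^2\equiv m\mod 2$ (as $k^2$ is even), so $u-s\equiv u-m\mod 2$, which is even exactly because $m-u$ is assumed even in the even-$k$ case. This parity reconciliation is the only point requiring care; there is no genuinely hard step, since the quantitative ingredient --- the local lower bound of Lemma~\ref{lemLD} --- is already available. (Alternatively one could sum Lemma~\ref{lemLD} directly over $z\in[-2\sqrt m,2\sqrt m]$ with $z\equiv u\mod k$; peeling off $k^2$ steps is merely a convenient device to keep the lattice-point count uniform in the borderline regime $k\approx\sqrt m$.)
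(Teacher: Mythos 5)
Your proof is correct, and it takes a mildly different route than the paper while relying on the same key ingredient, namely the local lower bound of Lemma~\ref{lemLD} together with counting lattice points in a residue class modulo $k$ subject to the parity constraint. The paper applies Lemma~\ref{lemLD} directly to $T_m$: among the integers $z$ with $|z|\le 2\sqrt m$, of the correct parity and with $z\equiv \tilde u \ \mathrm{mod}\ k$, there are at least $\lfloor \lfloor 2\sqrt m\rfloor/k\rfloor\ge 2$ (using $m\ge k^2$), each carrying probability at least $c_1/\sqrt m$, and summing these gives $c_1/k-O(k^{-2})\ge c_1/(2k)$; this is precisely the alternative you sketch in your closing parenthesis. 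You instead condition on $T_{m-k^2}$ and reduce to a walk of exactly $k^2$ steps, where a single admissible endpoint $z\in[-2k,2k]$ already contributes $c_1/\sqrt{k^2}=c_1/k$, so only one lattice point per residue class is ever needed and the count is trivially uniform in $m$, at the price of an extra conditioning step. Your parity bookkeeping is the delicate point and you handle it correctly: for odd $k$ the residue class contains points of both parities, and for even $k$ you use $s\equiv m-k^2\equiv m\ \mathrm{mod}\ 2$ on the support of $T_{m-k^2}$ together with the hypothesis that $m-u$ is even, exactly mirroring the case split in the paper. Both arguments in fact yield the stronger bound $c_1/k$ up to lower-order terms, so nothing is lost either way.
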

\begin{proof}[Proof of Corollary~\ref{cordiv}]
First, assume that $m$, and hence $u$, are both even. Since $(T_m-u) \mod k=0 \Longleftrightarrow T_m=\tilde u \mod k$, where $\tilde u=(u\mod k)\in\{0,1,2,\dots,k-1\}$, it suffices to show the statement for $\tilde u$.

Let $M=\lfloor 2\sqrt{m}\rfloor\in ( 2\sqrt{m}-1, 2\sqrt{m}]$ and define
\begin{align*}
\mathbb{I}=[-M,-M+1,\dots,-1,0,1,\dots,M]=
\mathbb{I}_0\cup \mathbb{I}_1;
\\
\mathbb{I}_0=\{z\in\mathbb{I}:\text{ $z$ is even}\};
\quad
\mathbb{I}_1=\{z\in\mathbb{I}:\text{ $z$ is odd}\}.
\end{align*} 
There are at least $M$ elements in each $\mathbb{I}_0$ and $\mathbb{I}_1$.

\underline{If $k$ is odd}, then each of these two sets contains at least $\lfloor \frac{M}{k} \rfloor$ elements $z$ such that $z=\tilde u\mod k$. If $m$ is even (odd, resp.) for all $z$ either in $\mathbb{I}_0$ (in $\mathbb{I}_1$, resp.) by Lemma~\ref{lemLD} for large $k$s (and hence large $m$) we have 
$\P(T_m=z)\ge c_1/\sqrt{m}$. Consequently,
\begin{align*}
\P(T_m=\tilde u \mod k)&\ge \sum_{z\in \mathbb{I},\ z=\tilde u \mod k}
\P(T_m=z) \ge \left\lfloor \frac Mk\right\rfloor \times \frac{c_1}{\sqrt{m}}
\ge 
\left( \frac Mk -1\right) \times
\frac{c_1}{\sqrt{m}}
\\
&\ge 
\left( \frac {2\sqrt{m}-1}k -1\right) \times
\frac{c_1}{\sqrt{m}}
\ge
\left(1-\frac1k\right) \times
\frac{c_1}{\sqrt{m}}
=\frac{c_1}{k}-O(k^{-2})
\end{align*}
since $m\ge k^2$.
 
\underline{If $k$ is even}, then if $m$ is even (and thus $a$ is also even) then $\mathbb{I}_0$ contains at least $\lfloor \frac{M}{k} \rfloor$ elements $z$ such that $z=\tilde u\mod k$ and at the same time Lemma~\ref{lemLD} is applicable for $z\in\mathbb{I}_0$. On the other hand,  
if $m$ (and so $u$) is odd then $\mathbb{I}_1$ contains at least $\lfloor \frac{M}{k} \rfloor$ elements $z$ 
such that $z=\tilde u\mod k$ and Lemma~\ref{lemLD} is applicable for $z\in\mathbb{I}_1$. The rest of the proof is the same as for the case when $k$ is odd.
\end{proof}

\vskip 5mm
\noindent\underline{\bf Step 2: Splitting $S(n)$}

Recall that that $i_k$ denotes the first index of block $k$
and note that the sum of all the steps within block~$k$ can be represented as 
$$
S(i_{k+1}-1)-S(i_k-1)=k\cdot T_k,\qquad
T_k=X_1^{(k)}+\dots+ X_{L_k}^{(k)}
$$ 
where $X_j^{(k)}$'s are i.i.d.\ $\sb$ random variables.

For $m=2,\dots$, let
\begin{align}\label{defkm}
k_m=\begin{cases} 
\lfloor  m\ln m\rfloor &\text{ if $\lfloor  m\ln
 m\rfloor$ is odd};\\
 \lfloor  m\ln m\rfloor+1 &\text{ if $\lfloor  m\ln m\rfloor$ is even}.
\end{cases}
\end{align}
Thus $k_m$ is {\em always} odd; $k_m$, $m=2,3,\dots$ equal $1, 3, 5, 7, 9, 13, 15, 19, 23,$ etc.
Define also 
$$
A_m=\{S(j)=0\text{ for some } i_{k_m}\le j< i_{k_m+1}\},
$$
the event that $S(j)$ hits zero for the steps within  block $B_{k_m}$, and the sequence of sigma-algebras 
$$
\G_m=\F_{i_{k_{m+1}}-1}=
\sigma\left(\bigcup_{\ell=1}^{k_m} \sigma\left(
X^{(\ell)}_1,X^{(\ell)}_2,\dots,X^{(\ell)}_{L_\ell}
\right)
\right).
$$
Intuitively, $\G_m$ contains all the information about the walk during its steps corresponding to the first $k_m$ blocks.

To simplify notations, let us now write $k=k_m$ and $k'=k_{m-1}$, and observe that
\begin{align}\label{BCkk}
k-k'&=k_m-k_{m-1} 
\ge  m\ln m - (m-1)\ln (m-1)-2= \ln m -1+O\left(\frac1{m}\right)\ge \ln m-2
\end{align}
for large $m$.

Let us split $S(j)$ where $j\in [i_k,i_{k+1})$, as follows:
\begin{align*}
S(j)&=S(i_{k'})+\sum_{n=k'+1}^{k-1}\left(X_1^{(n)}+\dots+X_{L_n}^{(n)}\right)
+k\cdot\left(X_1^{(k)}+\dots+ X_{j-i_k}^{(k)}\right)
\\ &=
S(i_{k'})+\left[\sum_{n=k'+1}^{k-2} \left(X_1^{(n)}+\dots+X_{L_n}^{(n)}\right)
 + (k-1)\sum_{\ell=1}^{i_k-2k^2-1}
X_{\ell}^{(k-1)}
\right]\\
& +(k-1)\cdot\Sigma_3+k\cdot\left(X_1^{(k)}+\dots+ X_{j-i_k}^{(k)}\right)
\\ & =\Sigma_1+\Sigma_2+(k-1)\cdot \Sigma_3+k\cdot \Sigma_4
\end{align*} 
where $\Sigma_1=S(i_{k'})$ and
\begin{align*}
\Sigma_2&=\sum_{n=k'+1}^{k-2} n T_n+(k-1)T_{k-1}',\quad T_{k-1}'=\sum_{\ell=i_{k-1}}^{i_k-1-2k^2} X_\ell^{(k-1)};\\
\Sigma_3&=X_{i_k-2k^2}^{(k-1)}+X_{i_k-2k^2+1}^{(k-1)}+\dots+X_{i_k-2}^{(k-1)}+X_{i_k-1}^{(k-1)};\\
\Sigma_4&=X_1^{(k)}+X_2^{(k)}+\dots+ X_{j-i_k}^{(k)}.
\end{align*}
Note that $\Sigma_i$, $i=1,2,3,4$ are independent, and $\Sigma_3$ has precisely $2k^2$ terms.

\vskip 5mm
\noindent\underline{\bf Step 3: Estimating $\Sigma_1$}

Recall that $k=k_m$, $k'=k_{m-1}$ and let
$$
E_{m-1}=\left\{\left|\Sigma_1\right|< k\sqrt { L_k}\right\}\in \G_{m-1}.
$$
By Lemma~\ref{lem:AH} and~\eqref{tcond}, assuming $k$ is large,
\begin{align}\label{eqEm}
\P(E_{m-1}^c)&\le \P(|S(k')|\ge k' \sqrt{L_k})
\le 2\exp\left(-\frac{k'^2\cdot L_k}{2 (L_1+2^2\cdot L_2+3^2\cdot L_3+\dots+k'^2\cdot L_{k'})}\right)
\\
& \le 2\exp\left(-\frac{L_k}{2 (L_1+ L_2+L_3+\dots+L_{k'})}\right)
\le 2\exp\left(-(1+\eps/2)\ln k\right)
=
\frac{2}{k^{1+\eps/2}}=:\eps_m .
\nonumber
\end{align}

\vskip 5mm
\noindent\underline{\bf Step 4: Estimating $\Sigma_2$}

Again, by Lemma~\ref{lem:AH} and~\eqref{tcond},  assuming that $k$  is sufficiently large,
\begin{align*}
\P\left(|\Sigma_2|\ge k  \sqrt{\frac{L_k}r}\right)&\le 
 2\exp\left(-\frac{k^2 r^{-1} L_k}{2[(k'+1)^2L_{k'+1}+\dots+
(k-2)^2 L_{k-2}+ 
 (k-1)^2(L_{k-1}-2k^2)]}\right)
 \\
 &\le 
 2\exp\left(-\frac{r^{-1}\, L_k  }{2[L_{k'+1}+\dots+L_{k-1}-2k^2]}\right) \le 
  2\exp\left(-1\right)
=0.7357588824\dots.
\end{align*}
Consequently, 
\begin{align}\label{eq:beta}
\P\left(|\Sigma_2|< k\sqrt{{L_k}/r}\right)&\ge 0.2\qquad\text{for large $k$}.
\end{align}
\vskip 5mm
\noindent\underline{\bf Step 5: Finishing the proof}

We have a trivial lower bound
\begin{align}\label{eqstar}
\P\left(A_m  \mid E_{m-1},\G_{m-1}\right)
&\ge
 \P\left(A_m \mid |\Sigma_2|< k\sqrt{\frac{L_k}r}, E_{m-1},\G_{m-1}\right) 
 \times \P\left(|\Sigma_2|< k\sqrt{\frac{L_k}r} \mid E_{m-1},\G_{m-1} \right) \nonumber
\\
& =:\mathsf{(*)}\times 0.2\qquad\text{ for large }k
\end{align}
by~\eqref{eq:beta},  since the second multiplier  equals $\P\left(|\Sigma_2|< k \sqrt{L_k/r}\right)$ by independence.

Let
\begin{align*}
\mathsf{Div}_k=\left\{\Sigma_1+\Sigma_2 +(k-1)\Sigma_3 =0 \mod k \right\}
=\left\{\Sigma_1+\Sigma_2 -\Sigma_3=0  \mod k \right\}.
\end{align*}
Since only on the event ${\rm Div}_k$, it is possible that $S(j)=0$ for some $j$ (since the step sizes are~$\pm k$ in the block $B_k$), we conclude that for large $k$
\begin{align}\label{partune}
\mathsf{(*)}
&=
\P\left(A_m \mid \mathsf{Div}_k,  |\Sigma_2|<k\sqrt{\frac{L_k}r}, E_{m-1},\G_{m-1}\right) 
\times 
\P\left( \mathsf{Div}_k \mid  |\Sigma_2|<k\sqrt{\frac{L_k}r}, E_{m-1},\G_{m-1}\right)  \nonumber
\\
& \ge
\P\left(A_m \mid \mathsf{Div}_k,  |\Sigma_2|< k\sqrt{\frac{L_k}r}, E_{m-1},\G_{m-1}\right)  \times \frac{c_1}{2k}
\end{align}
due to the  fact that by Corollary~\ref{cordiv},
$\P\left(\mathsf{Div}_k\mid \F_{i_k-2k^2-1}\right)\ge c_1/(2k).$
On the other hand,
\begin{align}\label{partdeux}
\P\left(A_m \mid \mathsf{Div}_k,  |\Sigma_2|< k\sqrt{\frac{L_k}r}, E_{m-1},\G_{m-1}\right) 
&\ge  
 \min_{z\in Z_k}
 \P(z+T_m=0\text{ for some }m\in[0, L_k])
 \nonumber
\\ &
  \ge \b:=1-\Phi\left(r^{-1/2}+3\right)>0
\end{align}
where $z+T_m$ is a simple random walk starting at $z$
(see Lemma~\ref{lemLD}), and $$Z_k=\left\{z\in\Z:\,|z|\le (r^{-1/2}+3)\, \sqrt{L_k}\right\}.$$
Indeed, using the last part of~\eqref{tcond}, and the conditions we imposed, we have
$$
\left|\Sigma_1+\Sigma_2+(k-1)\Sigma_3\right|\le k\sqrt{L_k}+k  \sqrt{L_k/r}+2(k-1)k^2<(1+r^{-1/2} +2) k\, \sqrt{L_k}
$$ 
for large $k$,  $S(j)=\left[\Sigma_1+\Sigma_2+(k-1)\Sigma_3\right]+k\cdot \Sigma_4$,
and by Lemma~\ref{lem:CLT}
\begin{align*}
\liminf_{k\to\infty}  \min_{z\in Z_k}
 \P(z+T_m=0\text{ for some }m\in[0, L_k])
\ge 2\,
 \P(\eta>r^{-1/2} +3)=2 \b,
\end{align*}
so the minimum in~\eqref{partdeux} is $ \ge \b$ for all sufficiently large $k$.

Finally, from~\eqref{eqstar}, \eqref{partune}, and~\eqref{partdeux} we get that 
$$
\sum_m \P\left(A_m\mid E_{m-1},\G_{m-1}\right)\ge \sum_m \frac{ 0.2\, c_1 \b}{2m\log m}=+\infty
$$
and $\P(E_m^c)$ is summable by~\eqref{eqEm}, so we can apply Lemma~\ref{lem:likeBC} of  Appendix~1 to conclude that events $A_m$ occur infinitely often and thus our ${\bf a}$-walk is recurrent.
\end{proof}

\section{Continuous example}
The example of ${\bf a}$-walk described in Theorem~\ref{th:logint} roughly corresponds  to the case $a_k=\lceil \log_\g k \rceil$, $k=1,2,\dots$. But what if $a_k$'s take non-integer values, but, for example, equal
$$
a_k= \log_\g k\equiv c \ln k,\qquad  k=1,2,\dots,
$$
where $\g=e^{1/c}>1$? In this Section we will study this example. It is unreasonable to assume that such ${\bf a}$-walk is recurrent, because of the irrationality of the step sizes, however, we might want to investigate if this walk is $C$-recurrent for {\em some} $C>0$. Our main result is the following
\begin{thm}\label{th:contlog}
Let $c>0$ and $a_k= c\ln k$. Then the ${\bf a}$-walk is a.s.\ $C$-recurrent for every $C>0$.
\end{thm}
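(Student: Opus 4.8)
The plan is to run the same five-step machine as in the proof of Theorem~\ref{th:logint}, the only real change being that, as the $a_k$ are irrational, we can no longer ask $S(n)=0$ and the divisibility trick of that proof is unavailable; instead we must show that with conditional probability bounded below the walk enters $[-C,C]$, and then finish, exactly as there, with the conditional Borel--Cantelli Lemma~\ref{lem:likeBC}. First I would set up a block decomposition: choose small constants depending only on $c$ and $C$ and, for a slowly growing sequence of "widths", cut the indices at the points where $c\ln k$ crosses the cumulative cut levels, obtaining blocks $B_j$ on which every step size lies in $[v_j,v_j+w_j)$ with $v_j$ the left cut level, $i_j\asymp e^{v_j/c}$ the first index of $B_j$, and $L_j:=|B_j|\asymp i_j(e^{w_j/c}-1)$. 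As in Theorem~\ref{th:logint} we work along a sparse subsequence of blocks $B_{j_m}$, and with the $m$-th "attempt" we use $B_{j_m}$ together with a growing number $\ell_m^{*}$ (of order $\log i_{j_m}$) of the blocks just after it; the gaps $j_{m+1}-j_m$ are taken larger than $\ell_m^{*}$ and large enough for all the Azuma estimates below to be summable, and the widths of $B_{j_m}$ and of $B_{j_m-1}$ are arranged so that, respectively, $L_{j_m}/i_{j_m}\ge(2+\eps)\ln m$ (an Azuma requirement) and the width stays $<C$ (a resolution requirement), while the total width of the $\ell_m^{*}$ attempt blocks stays $\ll v_{j_m}$ so their step size is $\approx v_{j_m}$ throughout.

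For $n$ in the range of attempt $m$ I would split $S(n)=\Sigma_1+\Sigma_2+\Sigma_3+\Sigma_4(n)$ in the manner of Theorem~\ref{th:logint}: $\Sigma_1=S(i_{j_{m-1}})$ is the position at the start of the previous attempt, $\Sigma_2$ collects the intervening blocks, $\Sigma_3$ is a long final segment ($\asymp j_m^2$ terms) of $B_{j_m-1}$, and $\Sigma_4(n)$ is the contribution of the attempt blocks; these four are independent. By Lemma~\ref{lem:AH}, using that $\mathrm{Var}\,\Sigma_1$ is negligible compared with $v_{j_m}^2L_{j_m}$ once $j_m-j_{m-1}$ is large enough, one gets $|\Sigma_1|\le v_{j_m}\sqrt{L_{j_m}}$ off an event of summable probability and, as in Steps~3--4 of that proof, $|\Sigma_2|\le v_{j_m}\sqrt{L_{j_m}/r}$ with probability at least $\tfrac15$. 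The steps of $B_{j_m-1}$ have size $\equiv-w_{j_m-1}\pmod{v_{j_m}}$ up to $O(w_{j_m-1})$, so reading $\Sigma_3$ modulo $v_{j_m}$ gives a random walk with increments of size $\asymp w_{j_m-1}<C$, which sweeps an interval of length $\ge v_{j_m}$ with probability $\to1$; a discrete intermediate-value argument then produces a stopping time at which $\Sigma_1+\Sigma_2+\Sigma_3$ is within $w_{j_m-1}$ of a multiple $z\,v_{j_m}$, with $|z|\lesssim\sqrt{L_{j_m}}$.

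From this position I would run the "descent" through the attempt blocks. In the first of them the simple $\pm1$ walk $W_\ell$ is run until it hits $z$ (which happens within $L_{j_m}$ steps with probability bounded below, since $|z|\lesssim\sqrt{L_{j_m}}$ by Lemma~\ref{lem:CLT}); at that time $S$ equals the small residue left by the resonance plus the "tilt" $\Gamma_\ell=\sum\delta_iX_i$ coming from the within-block spread of step sizes, which by Lemma~\ref{lem:AH} is $\lesssim w_{j_m}\sqrt{L_{j_m}}$. In each subsequent attempt block one repeats: the current target is the integer nearest to $-(\text{current position})/v_{j_m}$, of size $\asymp|P|/v_{j_m}$; the $\pm1$ walk hits it, and the typical hitting time being $\asymp(\text{target})^2$, the new tilt is $\lesssim w\cdot(\text{target})$, so $|P_{\mathrm{new}}|\lesssim w+O(w/v_{j_m})\,|P|$ — a contraction with ratio $\to0$. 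After $\ell_m^{*}\asymp\log i_{j_m}$ such blocks (with $w<C/4$ fixed) $|P|\le C$; one checks each step succeeds with conditional probability $\to1$, that $\ell_m^{*}$ times the per-step failure probability and the per-step rounding error $w$ both stay harmless, and hence that attempt $m$ succeeds with conditional probability at least some $c_*>0$. Then $\sum_m\P(\text{attempt }m\text{ succeeds}\mid\G_{m-1})=\infty$ while the bad events have summable probability, so Lemma~\ref{lem:likeBC} gives that infinitely many attempts succeed a.s., i.e.\ $|S(n)|\le C$ for infinitely many $n$, a.s.

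The hard part is the descent, and inside it the fact that the coarse walk $v_{j_m}W_\ell$ and the tilt $\Gamma_\ell$ are driven by the same signs, so they cannot be steered independently; the resolution is to fix every target from the already-observed past, to control $\Gamma_\ell$ by its maximal Azuma fluctuation over the (suitably truncated) block actually used, and to balance the three competing demands — a slowly growing width for the Azuma control of $\Sigma_1$, a width $<C$ for the intermediate-value steps, and enough descent blocks to absorb the per-block rounding error and the accumulating per-block failure probability. (Alternatively, one can establish $C_0$-recurrence with positive probability for a single $C_0$ by the argument above, deduce $\widetilde C$-recurrence with positive probability for every $\widetilde C$ from the paper's first theorem, since $a_n\to\infty$ and $a_n-a_{n-1}\to0$, and upgrade to probability one by applying L\'evy's $0$--$1$ law to $\P(\,|S(n)|\le\widetilde C\ \text{i.o.}\mid\F_N)=h_N(S(N))$ together with a uniform lower bound on $h_N$ near the support of $S(N)$.)
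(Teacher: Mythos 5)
Your overall architecture (block decomposition, Azuma control of $\Sigma_1,\Sigma_2$, a ``resonance'' step using the block just before the attempt, and Lemma~\ref{lem:likeBC} at the end) parallels the paper, but your core mechanism --- the multi-stage descent/contraction --- is different from the paper's, and as written it has a genuine gap. Your recursion $|P_{\mathrm{new}}|\lesssim w+O(w/v_{j_m})\,|P|$ is false from the second descent block onward. In each descent block you have $P_{\mathrm{new}}=(P+v z)+\Gamma_\tau$ with $z$ the nearest integer to $-P/v$; the additive term is the rounding error $|P+vz|$, which is only $\le w$ if $P$ happens to lie within $w$ of $v\Z$. That is true right after your mod-$v$ intermediate-value sweep with $B_{j_m-1}$, but it is destroyed immediately: the tilt of the first hitting stage has size up to $\sim w_{j_m}\sqrt{L_{j_m}}\gg v_{j_m}$ and its value modulo $v_{j_m}$ is completely uncontrolled, so from then on the additive error per stage is $\asymp v_{j_m}/2\to\infty$, and the descent stalls at about half a step size instead of reaching $[-C,C]$. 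The underlying issue is that you only ever bound the tilt from above (Azuma), whereas to kill the residue modulo the step size you need a \emph{lower} bound on the probability that some independent piece of the sum lands in a prescribed window of width $O(C)$ --- an anti-concentration-from-below statement. The paper supplies exactly this: its $\Sigma_3$ consists of $k^2$ steps of size $\approx k-1$ and $k^2$ steps of size $\approx k$, and Lemma~\ref{lem:dense} / Corollary~\ref{cor:dense} (via the local estimate of Lemma~\ref{lemLD} and Corollary~\ref{cordiv}, using that $k-1$ and $k$ generate all integers) give $\P(\Sigma_3\in[j-\eps,j+\eps])\ge c_1^2/4k^2$ for every even $|j|\le k^2$; since the attempt block's coarse walk passes through $\approx k$ well-separated levels $b_\ell$, the union of these target events has probability $\gtrsim 1/k$, which is exactly the summability needed --- no iteration at all.

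Your scheme could likely be repaired by repeating, inside \emph{every} descent block, the same ``position mod $v$ moves in increments $<C/4$, hence by intermediate value comes within $C/4$ of a multiple of $v$'' sweep before each coarse hit (so the additive error really is $O(C)$ each time), but that is not what is written, and the stated contraction is what your divergence of $\sum_m\P(A_m\mid E_{m-1},\G_{m-1})$ rests on; note also that if the per-stage success probability were only a constant $<1$ rather than $\to1$, the $\ell_m^*\to\infty$ stages would make $\P(A_m\mid\cdot)$ decay geometrically and the Borel--Cantelli sum would converge. Finally, your fallback via Theorem~1 plus L\'evy's $0$--$1$ law is not a substitute: $\{|S(n)|\le\tilde C\ \mathrm{i.o.}\}$ is not a tail event, and the required uniform lower bound on $h_N$ over the values $S(N)$ actually attains (of size up to $\sim\sqrt{\sum_{k\le N}a_k^2}$) is essentially the original problem in disguise; the paper instead gets almost-sure recurrence directly from Lemma~\ref{lem:likeBC}.
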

To prove this theorem, it is sufficient to show that whatever the value $c>0$ is, $\{|S(n)|\le 3\}$ happens i.o.\ almost surely. Indeed, take any $C>0$. Then the statement that ${\bf a}'$-walk with $a_k'=\frac{3c}{C}\ln k$, $k=1,2,\dots,$ is $3$-recurrent is equivalent to the statement that ${\bf a}$-walk with $a_k=c\ln k$ is $C$-recurrent.

The proof will proceed similarly to that of Theorem~\ref{th:logint}. Let us define $k_m$ slightly differently from~\eqref{defkm}; namely, let
\begin{align*}
k_m=\begin{cases} 
\lfloor  m\ln m\rfloor &\text{ if $\lfloor  m\ln
 m\rfloor$ is even};\\
 \lfloor  m\ln m\rfloor-1 &\text{ if $\lfloor  m\ln m\rfloor$ is odd}.
\end{cases}
\end{align*}
Thus now $k_m$ are always {\em even}. As before, set $k=k_m$, and $k'=k_{m-1}$, and define
$$
i_k=\lceil \g^k\rceil=\max\{i\ge 1:\ a_i< k\}+1
=\min\{i\ge 1:\ a_i\ge k\}\in [\g^k,\g^k+1),
$$ 
i.e., the first index when $a_i$ starts exceeding $k$.
For $i\in J_k:=[i_k,i_{k+1})$  write
\begin{align}\label{Ssplit}
\begin{array}{lcccc}
S(i)&=S(i_{k'}-1)&+\left[S(i_k-1)-S(i_{k'}-1)- \Sigma_3\right]
&+ \Sigma_3
&+[S(i)-S(i_k-1)]
\\ &=
\Sigma_1&+\Sigma_2 &+ \Sigma_3 &+\Sigma_4(i)
\end{array}
\end{align} 
where 
\begin{align*}
\Sigma_3&=\left[S(i_{k-1}+k^2-1)-S(i_{k-1}-1)\right]
+\left[S(i_k)-S(i_k-k^2)\right].
\end{align*}
Note that $\Sigma_i$, $i=1,2,3,4$ are independent, and $\Sigma_3$ has $2\cdot k^2$ terms, and contains the first $k^2$ and the last $k^2$ steps of the walk, when the step sizes lie in $[k,k+1)$.
\\[5mm]
Let
\begin{align}\label{Edef}
E_{m-1}=\left\{|\Sigma_1|< k \sqrt{i_k}\right\}=\left\{S(i_{k_{m-1}})< k_m \sqrt{i_{k_m} }\right\}
\end{align}
By Lemma~\ref{lem:AH}, since $a_i<k'<k$ for $i< i_{k'}$,
\begin{align}\label{Emcase2}
\begin{split}
\P(E_{m-1}^c)&=\P\left(|\Sigma_1|\ge k \sqrt{i_k}\right) \le  2
\exp\left(-\frac{i_k k^2}{2\sum_{j=1}^{i_{k'}}
a_j^2}\right)
\le
 2\exp\left(-\frac{i_k}{2i_{k'}}\right)
\le 2\exp\left(-\frac{\g^k-1}{2\g^{k'}}\right)
\\
&=2\exp\left(-\frac{\g^{k-k'}(1+o(1))}{2}\right)
=2\exp\left(-\frac{\g^{\ln m -2}}{2+o(1)}\right)
=2\exp\left(-\frac{m^{\ln\g}}{2\g^2+o(1)}\right)=:\eps_{m-1}
\end{split}
\end{align} 
using~\eqref{BCkk} for $k$ sufficiently large\footnote{Note that \eqref{BCkk} was stated for $k_m$ defined slightly differently, however, it holds here as well.}. Observe that $\eps_m$ is summable.
\\[5mm]
Similarly, by  Lemma~\ref{lem:AH} 
\begin{align*}
\P\left(|\Sigma_2|\ge 2k \sqrt{i_k}\right) 
\le 
 2
\exp\left(-\frac{4k^2 i_k}{2k^2\left(i_k-i_{k'}-2k^2\right)}\right)
<2\, e^{-2}=0.27\dots
\end{align*}
Hence,
\begin{align}\label{eqFm} 
\P(F_m)\ge 0.72\quad\text{where }F_m=\left\{|\Sigma_2|< 2k \sqrt{i_k}\right\}.
\end{align}

\begin{lemma}\label{lem:dense}
Let $n=k^2$ where $k$ is an even positive integer, and assume also that $k$ is sufficiently large. Suppose that $X_i$, $Y_i$, $i=1,2,\dots,n$, are i.i.d.\ $\sb$.  Let 
\begin{align}\label{eq:T}
T=(k-1)\,(X_1+X_2+\dots+X_n)+k\,(Y_1+Y_2+\dots+Y_n).
\end{align}
Then
$$
\P(T= j)\ge \frac{c_1^2}{4n}\quad  \text{ for each }j=0,\pm 2,\pm 4,\dots,\pm n
$$
where $c_1$ is the constant from Lemma~\ref{lemLD}.
\end{lemma}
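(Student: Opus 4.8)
The plan is to reduce the statement to a single favourable joint outcome of the two walks plus an elementary Diophantine computation. Set $U = X_1 + \dots + X_n$ and $V = Y_1 + \dots + Y_n$, so that $U$ and $V$ are independent copies of a simple symmetric random walk run for $n = k^2$ steps and $T = (k-1)U + kV$. It then suffices to exhibit, for each admissible $j$, one pair of integers $(u,v)$ with $(k-1)u + kv = j$, with $|u| \le 2k = 2\sqrt n$ and $|v| \le 2k = 2\sqrt n$, and with $u$ and $v$ both even (equivalently $n+u$ and $n+v$ both even, since $n = k^2$ is even). Indeed, Lemma~\ref{lemLD} then applies to $U$ and to $V$, because $k$, hence $n$, is large, and by independence $\P(T = j) \ge \P(U = u)\,\P(V = v) \ge (c_1/\sqrt n)^2 = c_1^2/n \ge c_1^2/(4n)$; the factor $4$ leaves comfortable slack (one could also just sum over several such pairs).

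To produce the pair $(u,v)$ I would use that $\gcd(k-1,k) = 1$, so $(u,v) = (-j,\,j)$ is one solution and the general integer solution is $u = kt - j$, $v = j - (k-1)t$, $t \in \Z$. The parities are then automatic: since $k$ is even and $j$ is even, $u = kt - j$ is even for \emph{every} $t$; and $v = j - (k-1)t$ has the parity of $t$ (as $k-1$ is odd and $j$ is even), so $v$ is even precisely when $t$ is even. Thus I only need an \emph{even} integer $t$ with $|kt - j| \le 2k$ and $|(k-1)t - j| \le 2k$, i.e.\ with $t \in I_1 \cap I_2$, where $I_1 = [\,j/k - 2,\ j/k + 2\,]$ and $I_2 = \big[\,j/(k-1) - \tfrac{2k}{k-1},\ j/(k-1) + \tfrac{2k}{k-1}\,\big]$.

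It remains to see that $I_1 \cap I_2$ contains an even integer. Both intervals have half-length at least $2$, and their centres $j/k$ and $j/(k-1)$ differ by $|j|\big(\tfrac1{k-1} - \tfrac1k\big) = \tfrac{|j|}{k(k-1)} \le \tfrac{k}{k-1}$, where I used the hypothesis $|j| \le n = k^2$; hence for $k$ large $I_1 \cap I_2$ is a (nonempty) closed interval of length at least $2 + 2 - \tfrac{k}{k-1} > 2$, so it contains two consecutive integers and in particular an even one, say $t_0$. Setting $u_0 = kt_0 - j$ and $v_0 = j - (k-1)t_0$ then completes the argument as in the first paragraph. The only step that needs genuine care is this last one — verifying that the two size-windows for $t$ really do overlap, and overlap by enough to capture an \emph{even} integer, uniformly over the whole range $|j| \le k^2$ (the extreme values $j = \pm k^2$ being the tightest case); everything else is just independence together with Lemma~\ref{lemLD}.
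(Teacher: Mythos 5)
Your proposal is correct and follows essentially the same route as the paper: write $j=(k-1)u+kv$ with $u,v$ even and $|u|,|v|\le 2k$, then multiply the two point-probability lower bounds from Lemma~\ref{lemLD} (via Corollary~\ref{cordiv}) using independence. The only difference is cosmetic — the paper finds the small even solution by a pigeonhole on the residues of $\tilde j-mk$ modulo $k-1$, while you parametrize all solutions and locate an even parameter $t$ in the intersection of the two admissible intervals — and your bound $c_1^2/n$ is even slightly stronger than the stated $c_1^2/(4n)$.
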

\begin{proof}
It follows from Corollary~\ref{cordiv} that
\begin{align}\label{eqXY}
\P(X_1+\dots+X_n=\ell)\ge \frac{c_1}{2k} ,\quad \P(Y_1+\dots+Y_n=\ell)\ge \frac{c_1}{2k} \quad\text{for all even $\ell$ such that }|\ell|\le 2k.
\end{align}
Let $j= 2\tilde{j}\in\{0,2,4,\dots,n-2,n\}$. Consider the sequence of $k-1$ numbers 
$$
\tilde{j},\ \tilde{j}-k,\ \tilde{j}-2k,\ \tilde{j}-3k,\dots, \tilde{j}-(k-2)k;
$$
they all give different remainders when divided by $k-1$. Hence there must be an $m\in \{0,1,\dots,k-2\}$ such that $\tilde{j}-mk=b(k-1)$ and $b$ is an integer; moreover, since $0\le \tilde{j}\le n/2$, we have $b\in\left[-\frac{k(k-2)}{k-1}, \frac{n}{2(k-1)}\right]$. For such~$m$ and~$b$ we have $j=2\tilde{j}=(2m)k+(2b)(k-1)$, and, since both  $|2m|$ and $|2b|\le 2k$, 
$$
\P(T=j)\ge \P(X_1+\dots+X_n=2b)\cdot\P(Y_1+\dots+Y_n=2m)
\ge \left(\frac{c_1}{2k}\right)^2=\frac{c_1^2}{4n}
$$
by~\eqref{eqXY}. The result for negative $j$ follows by symmetry.
\end{proof}

\begin{corollary}\label{cor:dense}
Let  $\eps=\frac{2ck^4}{\g^{k-1}}$. Then for large even $k$
$$
\P\left(\Sigma_3\in [j-\eps,j+\eps]\right)\ge \frac{c_1^2}{4k^2}\quad  \text{ for each }j=0,\pm 2,\pm 4,\dots,\pm k^2.
$$
\end{corollary}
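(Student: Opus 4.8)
The plan is to realize $\Sigma_3$ as a \emph{deterministically} small perturbation of the random variable $T$ from Lemma~\ref{lem:dense} (taken with $n=k^2$), and then transfer the pointwise lower bound $\P(T=j)\ge c_1^2/(4k^2)$ to the $\eps$-interval around $j$. Concretely, I would write the $2k^2$ independent Rademacher steps making up $\Sigma_3$ as $X_0,\dots,X_{k^2-1}$ (the first batch, indices $i_{k-1},\dots,i_{k-1}+k^2-1$) and $Y_0,\dots,Y_{k^2-1}$ (the second batch, indices $i_k,i_k-1,\dots,i_k-k^2+1$), so that $\Sigma_3=\sum_{l=0}^{k^2-1}a_{i_{k-1}+l}X_l+\sum_{l=0}^{k^2-1}a_{i_k-l}Y_l$. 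Since $i_k-i_{k-1}=\lceil\g^k\rceil-\lceil\g^{k-1}\rceil\ge\g^{k-1}(\g-1)-1\gg k^2$ for large $k$, the first batch lies entirely inside block $J_{k-1}$ and the second batch consists of the last $k^2-1$ indices of $J_{k-1}$ together with $i_k$.

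Next I would record the elementary bounds on the coefficients using $c\ln\g=1$. Because $i_{k-1}=\lceil\g^{k-1}\rceil$ is the first index with $a_i\ge k-1$, each coefficient of the first batch equals $(k-1)+r_l$ with $0\le r_l=c\ln(i_{k-1}+l)-(k-1)\le c\ln\bigl(1+(l+1)\g^{-(k-1)}\bigr)\le c(l+1)\g^{-(k-1)}$; similarly each coefficient of the second batch equals $k+s_l$ with $|s_l|\le 2c(l+1)\g^{-k}$ for large $k$, where the inequality $\ln(1-x)\ge-2x$ on $[0,\tfrac12]$ handles the indices sitting just to the left of $i_k$ (still in $J_{k-1}$) and $s_0\ge0$ is trivially small. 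Thus $\Sigma_3=T+R$, where $T=(k-1)\sum_l X_l+k\sum_l Y_l$ is exactly the $T$ of Lemma~\ref{lem:dense} with $n=k^2$ (and $k$ even, as required there), and $R=\sum_l r_lX_l+\sum_l s_lY_l$.

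Finally, since $r_l\ge0$ and $|X_l|=|Y_l|=1$, the remainder obeys the deterministic bound $|R|\le\sum_l r_l+\sum_l|s_l|\le\bigl(1+2\g^{-1}\bigr)c\g^{-(k-1)}\sum_{l=0}^{k^2-1}(l+1)=\bigl(1+2\g^{-1}\bigr)c\g^{-(k-1)}\tfrac{k^2(k^2+1)}{2}$, and since $1+2\g^{-1}<3$ for $\g>1$ this is $<2ck^4\g^{-(k-1)}=\eps$ as soon as $3(k^4+k^2)<4k^4$, i.e.\ $k\ge2$. Hence $\{T=j\}\subseteq\{|\Sigma_3-j|\le\eps\}$ for every integer $j$, and applying Lemma~\ref{lem:dense} to the values $j\in\{0,\pm2,\dots,\pm k^2\}$ gives $\P(\Sigma_3\in[j-\eps,j+\eps])\ge\P(T=j)\ge c_1^2/(4k^2)$, which is the claim. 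The only genuinely delicate part is the bookkeeping of the middle paragraph — checking that every $a_j=c\ln j$ occurring in $\Sigma_3$ is within $O(ck^2\g^{-(k-1)})$ of $k-1$ or of $k$, which rests entirely on the exponential gap $i_k-i_{k-1}\gg k^2$; granted that, the estimate $|R|<\eps$ and the transfer of Lemma~\ref{lem:dense} are routine.
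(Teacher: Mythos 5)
Your proposal is correct and follows essentially the same route as the paper: write $\Sigma_3$ coefficient-by-coefficient as $T+R$ with $T$ exactly the variable of Lemma~\ref{lem:dense} for $n=k^2$, bound the coefficient deviations from $k-1$ and $k$ by $O\bigl(c(l+1)\g^{-(k-1)}\bigr)$, sum to get the deterministic estimate $|R|<\eps$, and transfer the pointwise bound $\P(T=j)\ge c_1^2/(4k^2)$ to the $\eps$-neighbourhood of $j$. Your bookkeeping (including the $l=0$ term at $i_k$ and the $\ln(1-x)\ge-2x$ step) is a slightly more careful version of the paper's own coefficient estimates, so there is nothing to add.
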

\begin{proof}
$\Sigma_3$ has the same distribution as
\begin{align*}
\sum_{\ell=1}^{k^2}c\ln (i_{k-1}-1+\ell)\, X_\ell
+\sum_{\ell=1}^{k^2}c\ln (i_k-\ell)\, Y_\ell
\end{align*}
for some i.i.d.\ $X_\ell, Y_\ell\sim\sb$.
At the same time, for $\ell\ge 1$,
\begin{align*}
|c\ln (i_{k-1}-1+\ell)-(k-1)|&=|c\ln (\lceil\g^{k-1} \rceil+ \ell-1)-(k-1)|
\\&\le |c\ln (\g^{k-1} + \ell)-(k-1)|
= c\ln \left(1+ \frac{\ell}{\g^{k-1}}\right)\le \frac{c\ell}{\g^{k-1}} 
\end{align*}
Similarly,
\begin{align*}
k-c\ln (i_k-\ell)&=k- c\ln (\lceil\g^k \rceil-\ell)
=k-c\ln (\g^k -\ell')=-c\ln \left(1-\frac {\ell'}{\g^k}\right)
\in \left[0,\frac{c\ell}{\g^{k-1}}\right]
\end{align*}
for some $\ell'\in[\ell-1,\ell]$, assuming $\ell=o(\g^k)$.
As a result, for $T$ defined by~\eqref{eq:T},
$$
|\Sigma_3-T|\le \sum_{\ell=1}^{k^2}\frac{2c\ell}{\g^{k-1}} 
=\frac{ck^2(k^2+1)}{\g^{k-1}}\le \frac{2ck^4}{\g^{k-1}}.
$$
Now the result follows from Lemma~\ref{lem:dense}.
\end{proof}

\begin{proof}[Proof of Theorem~\ref{th:contlog}]
Recall that $J_k=[i_k,i_{k+1})$ and define
$$
A_m=\{S(i)=0\text{ for some }i\in J_{k_m}\}.
$$
Then
\begin{align}\label{eqstarcont}
\P(A_m\mid E_{m-1},{\cal G}_{m-1})
\ge 
0.72\times \P(A_m\mid F_m, E_{m-1},{\cal G}_{m-1})
\end{align}
(please see the definition of $F_m$ in~\eqref{eqFm}).
Recall formula~\eqref{Ssplit} and write
$$
\tilde S(i)=S(i)-\Sigma_3=\Sigma_1+\Sigma_2+\Sigma_4(i). 
$$
From now on assume that  $|\Sigma_1|< k\sqrt{i_k}$ and $|\Sigma_2|< 2k\sqrt{i_k}$, that is, $E_{m-1}$ and $F_m$ occur. Also assume w.l.o.g.\ that $\Sigma_1+\Sigma_2\ge 0$. 
Let
$$
L_k=i_{k+1}-i_k-k^2=(\g-1)\g^k+o(\g^k).
$$
Consider a simple random walk with steps $Y_i\sim\sb$ during its first $L_k$ steps. The probability that its minimum will be equal to or below the level $-3\sqrt{i_k}=-\frac{3+o(1)}{\sqrt{\g-1}}\,\sqrt{L_k}$ converges by Lemma~\ref{lem:CLT} to
$$
2\,\P\left(\eta>\frac{3}{\sqrt{\g-1}}\right)
=2-2\,\Phi\left(\frac{3}{\sqrt{\g-1}}\right)=:2 c_2\in (0,1)
$$
as $k\to\infty$ (recall that $\eta\sim{\cal N}(0,1)$).
As a result, by Proposition~\ref{prop:Saty}, the probability that  for some $j_0\in \{i_k,i_k+1,i_k+2,\dots,i_k+L_k-1\}$ we have the downcrossing, that is,
$$
\tilde S(j_0-1)\ge 0> \tilde S(j_0)
$$
is bounded below by $c_2$ for $k$ sufficiently large. 
Formally, let 
\begin{align*}
j_0&=\inf\{j>i_k: \ \tilde S(j)<0\};\\
\mathcal{C}_0&=\{i_k\le j_0\le i_k+L_k-1\},
\end{align*}
so we have showed that on $E_{m-1}\cap F_m\cap \{\Sigma_1+\Sigma_2>0\}$ we have $\P(\mathcal{C}_0)>c_2$ for large $k$.

Now assume that event $\mathcal{C}_0$ occurred and define additionally
\begin{align*}
b_0&=\tilde S(j_0)\in (-k-1,0];\\
{\cal C}&=\left\{\max_{0\le h\le k^2} \sum_{g=1}^h X_{j_0+g}\ge k\right\}.
\end{align*}
Again, from Lemma~\ref{lem:CLT},
$$
\P({\cal C})
=2\, \P(X_{j_0+1}+X_{j_0+2}+\dots+X_{j_0+k^2}\ge k)\to 2(1-\Phi(1))=0.3173\dots \quad\text{as }k\to\infty.
$$
From now on assume that $k$ is so large that $\P({\cal C})>0.2$. On the event ${\cal C}$ there exists an increasing  sequence $j_1,j_2,\dots,j_k$ such that
$$
j_0<j_1<j_2<\dots<j_k\le j_0+k^2<i_{k+1}
$$
such that $X_{j_0+1}+X_{j_0+2}+\dots+X_{j_\ell}=\ell$ for each $\ell=1,2,\dots, k$, since the random walk must pass through each integer in $\{1,2,\dots,k\}$ in order to reach level $k$.

For $\ell=1,2,\dots,k$, define
\begin{align*}
b_\ell:=\tilde S(j_\ell)&=b_0+\sum_{h=j_0+1}^{j_\ell} a_h X_h
=b_0+a_{j_0}\left[\sum_{h=j_0+1}^{j_\ell}  X_h\right]
+\sum_{h=j_0+1}^{j_\ell} \left(a_h-a_{j_0}\right) X_h
\\
&=b_0+a_{j_0}\ell +O\left(\frac{k^4}{\g^k}\right)
\end{align*}
since for $h\in [j_0,j_0+k^2]\subseteq [i_k,i_{k+1})$ we have
$$
|a_h-a_{j_0}|=c\left | \ln \frac{a_h}{a_{j_0}}\right|\le c\left | \ln \frac{i_k+k^2}{j_k}\right|=O\left(\frac{k^2}{\g^k}\right).
$$
As a result,
$$
-(k+1)<b_0<b_1<b_2<\dots<b_{k-1}<(k-1)(k+1)<k^2
$$
and moreover the distance between consecutive $b_g$'s is at least two (provided $k$ is large). 
For $\ell=0,1,\dots,k-1$ define
$$
\tilde b_\ell=\sup\left\{x\in 2\Z: x+b_\ell\in \left[\frac12,3-\frac12\right)\right\}
\equiv-2\left\lfloor\frac{b_\ell}2-\frac14\right\rfloor.
$$
Then $\tilde b_\ell$'s are  all distinct even integers satisfying   $|\tilde b_\ell|\le k^2$.

As a result,
\begin{align*}
\P(A_m&\mid F_m, E_{m-1},{\cal G}_{m-1})\ge c_2 \times 0.2 \times 
\P(S(i)\in [0,3]\text{ for some }i\in J_k\mid \mathcal{C},\mathcal{C}_0)
\\
&\ge c_2 \times 0.2 \times 
\P(\tilde S(i_\ell)+\Sigma_3\in [0,3]\text{ for some }\ell=0,1,\dots,k-1\mid \mathcal{C},\mathcal{C}_0)
\\
&= c_2 \times 0.2 \times 
\P(b_\ell+\Sigma_3\in [0,3]\text{ for some }\ell=0,1,\dots,k-1\mid \mathcal{C},\mathcal{C}_0)
\\
&\ge c_2 \times 0.2 \times 
\P(\tilde \Sigma_3\in [\tilde b_\ell-\eps,\tilde b_\ell+\eps]\text{ for some }\ell=0,1,\dots,k-1\mid \mathcal{C},\mathcal{C}_0)\\
&= c_2 \times 0.2 \times 
\sum_{\ell=0}^{k-1}\P( \Sigma_3\in [\tilde b_\ell-\eps,\tilde b_\ell+\eps]\mid \mathcal{C},\mathcal{C}_0)
\ge c_2 \times 0.2 \times k\times \frac{c_1^2}{4k^2}
=\frac{c_1^2 c_2}{20k}
\end{align*}
assuming that $\eps$ in Corollary~\ref{cor:dense} is sufficiently small.
Finally,
\begin{align*}
\P(A_m\mid E_{m-1},{\cal G}_{m-1})
\ge 0.72\times \P(A_m\mid F_m, E_{m-1},{\cal G}_{m-1})\ge  \frac{0.72\, c_1^2 c_2}{20k_m}
\ge \frac{0.036\, c_1^2 c_2}{ m\ln m}
\end{align*}
the sum of which diverges.
Hence, recalling~\eqref{Emcase2}, we can again apply Lemma~\ref{lem:likeBC}.
\end{proof}

\section{Sublinear  growth of step sizes}
Throughout this Section we assume
$$
a_k=\lfloor k^\b\rfloor,\qquad  0<\b<1.
$$

\begin{prop}\label{prop:SUL}
Let $S(n)=a_1 X_1+\dots+a_n X_n$ where $a_k=\lfloor k^\b\rfloor$, $0<\b<1$. Then 
$$
\P(|S(n)|=z) \le \frac{\nu}{n^{1/2+\b}}
\qquad\text{for all large }n,
$$
for some  $\nu>0$.
\end{prop}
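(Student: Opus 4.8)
The plan is to run the Fourier bound of Proposition~\ref{prop:fourier}. Since $S(n)$ is integer valued, $\P(|S(n)|=z)=0$ unless $z\in\Z_{\ge 0}$, and for such $z$ symmetry of the $X_i$ gives $\P(|S(n)|=z)\le 2\P(S(n)=z)$; so it is enough to bound $\P(S(n)=z)$ uniformly in $z$. By the identity established inside the proof of Proposition~\ref{prop:fourier},
$$
\P(S(n)=z)=\frac1\pi\int_0^\pi \cos(tz)\prod_{k=1}^n\cos(ta_k)\dd t\;\le\;\frac1\pi\int_0^\pi\prod_{k=1}^n|\cos(ta_k)|\dd t .
$$
Write $\rho(x):=\mathrm{dist}(x,\pi\Z)\in[0,\pi/2]$. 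One has the elementary inequality $|\cos x|\le e^{-\rho(x)^2/2}$ for every real $x$ (by $\pi$-periodicity it reduces to $\cos x\le e^{-x^2/2}$ on $[0,\pi/2]$, which follows from $\tan x\ge x$). Grouping the indices $k\le n$ by the common value $m=\lfloor k^\b\rfloor$ and setting $N_m=\#\{k\le n:\lfloor k^\b\rfloor=m\}$ (so $\sum_m N_m=n$ and $\sum_m N_m m^2=\sum_{k\le n}a_k^2=:V_n$), this gives
$$
\prod_{k=1}^n|\cos(ta_k)|\le\exp\Bigl(-\tfrac12\sum_{m=1}^{a_n}N_m\,\rho(tm)^2\Bigr)=:e^{-f(t)/2}.
$$
Finally $\rho((\pi-t)m)=\rho(tm)$, hence $f(\pi-t)=f(t)$, so it suffices to bound $\frac2\pi\int_0^{\pi/2}e^{-f(t)/2}\dd t$.

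I would split $[0,\pi/2]$ into three ranges with cut-offs $n^{-\b}$ and $n^{-\b/2}$ and lower-bound $f(t)$ in each. \emph{(I) $0\le t\le n^{-\b}$}: here $tm\le ta_n\le 1<\pi/2$ for all $m\le a_n$, so $\rho(tm)=tm$ and $f(t)=t^2V_n$; since a standard sum estimate gives $V_n=\sum_{k\le n}\lfloor k^\b\rfloor^2\ge c_0 n^{2\b+1}$, this range contributes at most $\int_0^\infty e^{-t^2V_n/2}\dd t=\sqrt{\pi/(2V_n)}\le C_0\,n^{-\b-1/2}$, which is the main term. \emph{(II) $n^{-\b}\le t\le n^{-\b/2}$}: keep only the indices $m\le\tfrac1{2t}$ (all $<a_n$ for large $n$, and $\tfrac1{2t}\to\infty$); for such $m$, $\rho(tm)=tm$ and $N_m\ge c_1 m^{1/\b-1}$, so $f(t)\ge c_1 t^2\sum_{m\le 1/(2t)}m^{1/\b+1}\ge c_2\,t^{-1/\b}\ge c_2\,n^{1/2}$, and this range contributes at most $n^{-\b/2}e^{-c_2 n^{1/2}/2}$. \emph{(III) $n^{-\b/2}\le t\le\pi/2$}: pair the value-blocks two at a time and use $\rho(a)^2+\rho(b)^2\ge\tfrac12\rho(a-b)^2$ together with $N_{2j-1}\rho(t(2j-1))^2+N_{2j}\rho(t(2j))^2\ge\min(N_{2j-1},N_{2j})\bigl(\rho(t(2j-1))^2+\rho(t(2j))^2\bigr)$; since $N_{m+1}/N_m\to 1$ one gets $\sum_j\min(N_{2j-1},N_{2j})\ge n/4$ for large $n$, whence $f(t)\ge\tfrac{n}{8}\rho(t)^2=\tfrac n8 t^2\ge\tfrac18 n^{1-\b}$ on this range, contributing at most $\tfrac\pi2 e^{-n^{1-\b}/16}$.

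Adding the three contributions, $\P(S(n)=z)\le\frac2\pi\bigl(C_0 n^{-\b-1/2}+n^{-\b/2}e^{-c_2 n^{1/2}/2}+\tfrac\pi2 e^{-n^{1-\b}/16}\bigr)$; because $\b<1$ the last two terms are $o(n^{-\b-1/2})$, so $\P(|S(n)|=z)\le\nu\,n^{-1/2-\b}$ for all large $n$ with a suitable $\nu=\nu(\b)$. The step I expect to be the main obstacle is range (II): once $t$ leaves the window of width $\asymp n^{-\b}$ around $0$ (and, by symmetry, around $\pi$) the clean identity $f(t)=t^2V_n$ breaks down, yet when $\b\ge\tfrac12$ the value of $t$ is still far too small for the equidistribution/pairing bound of (III) to beat $n^{-\b-1/2}$; bridging this gap requires the sharper estimate $f(t)\gtrsim t^{-1/\b}$, which rests on the block-size asymptotics $N_m\asymp m^{1/\b-1}$ and on tuning the thresholds $n^{-\b}$, $n^{-\b/2}$ so that the three regimes overlap. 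The remaining ingredients — the sum estimate for $V_n$, the near-monotonicity of $(N_m)$, and the trigonometric inequalities — are routine.
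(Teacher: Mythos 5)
Your argument is correct, and while it starts from the same Fourier bound $\P(S(n)=z)\le\frac1\pi\int_0^\pi\prod_k|\cos(ta_k)|\,\dd t$ as the paper, the way you control the integral is genuinely different from the paper's key Lemma~\ref{lemSUL}. The paper proves a sharp two-sided asymptotic $\int_{-\pi}^\pi F_n(t)\,\dd t\sim\sqrt{8\pi(1+2\b)}\,n^{-\b-1/2}$ in the spirit of Sullivan: four ranges of $t$, the main term coming from $[0,n^{-\b-1/2+\eps}]$, the ranges up to $c_1n^{-\b}$ handled by $|\cos u|\le e^{-u^2/2}$ and the monotonicity $F_n\le F_{\lfloor rn\rfloor}$, and the range $[c_1n^{-\b},\pi/2]$ handled by the AM--GM bound $F_n(t)\le\bigl(\tfrac12+\tfrac{U_n(t)}{2n}\bigr)^{n/2}$ together with a block-by-block estimate of the exponential sum $U_n(t)=\sum_k\cos(2ta_k)$ showing $U_n(t)\le(1-\b/4)n$. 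You instead use the periodized Gaussian bound $|\cos x|\le e^{-\rho(x)^2/2}$ throughout: the main term comes from the Gaussian integral with variance $V_n\asymp n^{2\b+1}$; the intermediate range $[n^{-\b},n^{-\b/2}]$ is killed by summing complete value-blocks $m\le 1/(2t)$, giving $f(t)\gtrsim t^{-1/\b}\ge n^{1/2}$; and $[n^{-\b/2},\pi/2]$ is killed by pairing consecutive blocks via $\rho(a)^2+\rho(b)^2\ge\tfrac12\rho(a-b)^2$, giving $f(t)\ge\tfrac n8 t^2\ge\tfrac18 n^{1-\b}$. This is more elementary (no exponential-sum computation, no monotonicity trick), at the price of yielding only the upper bound rather than the asymptotic equality of Lemma~\ref{lemSUL} --- but the upper bound is all that Proposition~\ref{prop:SUL} and Theorem~\ref{thm:beta} use. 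In a written-up version you should spell out the two points you flag as routine: in range (II), that $m\le 1/(2t)\le\tfrac12 n^\b<a_n$ guarantees the block of value $m$ lies entirely in $\{1,\dots,n\}$, so $N_m\ge c_1m^{1/\b-1}$ (restricting, say, to $m\in[\tfrac1{4t},\tfrac1{2t}]$ to have $m$ large); and in range (III), that $\sum_j\min(N_{2j-1},N_{2j})\ge n/4$ requires discarding the finitely many small blocks and the possibly truncated last block (of size $O(n^{1-\b})=o(n)$) before invoking $N_{m+1}/N_m\to1$.
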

\begin{proof} 
Let $F_n(t)=\prod_{k=1}^n |\cos(t a_k)|$.
For all $z\in\Z$  we have
$$
\P(S(n)=z)=\frac1{2\pi}\int_{-\pi}^\pi
e^{-itz}\E e^{it S(n)}\dd t
\le \frac1{2\pi}\int_{-\pi}^\pi
\left|\E e^{it S(n)}\right|\dd t
=\frac1{2\pi}\int_{-\pi}^{\pi}
F_n(t)\dd t
\le\frac{\nu}{n^{1/2+\b}}
$$
by Lemma~\ref{lemSUL} for some $\nu>0$, for all large $n$. 
\end{proof}

\begin{thm}\label{thm:beta}
Suppose that $a_k=\lfloor k^\b\rfloor$, $0<\b<1$. Then the ${\bf a}$-walk is a.s.\ transient.
\end{thm}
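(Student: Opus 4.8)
The plan is to reduce to showing that, for each fixed integer $C\ge 0$, the event $\{|S(n)|\le C\}$ occurs only finitely often a.s.; since $C$-recurrence is monotone in $C$, a countable union over $C\in\Z_+$ then gives transience. The structural observation I would use is that the indices fall into consecutive blocks $I_\ell=\{k:a_k=\ell\}=\Z\cap[\ell^{1/\b},(\ell+1)^{1/\b})$, with $k_\ell:=\min I_\ell=\lceil\ell^{1/\b}\rceil\asymp\ell^{1/\b}$ and block length $|I_\ell|=\lceil(\ell+1)^{1/\b}\rceil-k_\ell\asymp\ell^{1/\b-1}\to\infty$, and that within a block the partial sum changes by steps of size \emph{exactly} $\ell$. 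If $|S(n)|\le C$ for infinitely many $n$, then (each block being finite and $a_k\to\infty$) the events $D_\ell:=\{\exists\,k\in I_\ell:|S(k)|\le C\}$ occur for infinitely many $\ell$, so it is enough to prove $\sum_\ell\P(D_\ell)<\infty$ and quote the Borel--Cantelli lemma.

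To bound $\P(D_\ell)$ for $\ell>2C$, I condition on $\F_{k_\ell-1}$, writing $s:=S(k_\ell-1)$; for $k=k_\ell-1+h\in I_\ell$ one has $S(k)=s+\ell\,T_h$ with $T_h=X_{k_\ell}+\dots+X_{k_\ell-1+h}$ a simple random walk independent of $\F_{k_\ell-1}$. Since $[-C,C]$ is shorter than $\ell$, there is at most one integer $v=v(s)$ (determined by $s\bmod\ell$) with $s+\ell v\in[-C,C]$, and $D_\ell$ holds iff $v(s)$ exists and $T$ reaches level $v(s)$ within its first $|I_\ell|$ steps; by the reflection principle $\P(D_\ell\mid\F_{k_\ell-1})\le\mathbf{1}\{v(s)\text{ exists}\}\,\P(\max_{1\le h\le|I_\ell|}|T_h|\ge|v(s)|)$. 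Taking expectations (the walk $T$ being independent of $s$) gives $\P(D_\ell)\le\sum_{v\in\Z}\P\big(S(k_\ell-1)\in[-\ell v-C,-\ell v+C]\big)\,\P(\exists\,h\le|I_\ell|:T_h=v)$. By Proposition~\ref{prop:SUL} each first factor is $\le(2C+1)\max_z\P(S(k_\ell-1)=z)\le(2C+1)\nu(k_\ell-1)^{-1/2-\b}$, uniformly in $v$; and $\sum_{v\in\Z}\P(\exists\,h\le|I_\ell|:T_h=v)\le1+2\sum_{v\ge1}\P(\max_{h\le|I_\ell|}T_h\ge v)\le1+4\sum_{v\ge1}\P(T_{|I_\ell|}\ge v)\le1+4\,\E[T_{|I_\ell|}^+]\le1+4\sqrt{|I_\ell|}$, using reflection and $\E[T_m^+]\le\sqrt m$. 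Substituting $k_\ell\asymp\ell^{1/\b}$, $|I_\ell|\asymp\ell^{1/\b-1}$ yields $\P(D_\ell)=O\big(\ell^{-1/(2\b)-1}\cdot\ell^{1/(2\b)-1/2}\big)=O(\ell^{-3/2})$, which is summable; this is uniform in $\b\in(0,1)$ (for $\b>\tfrac12$ one could instead argue directly, as then $\sum_n\P(S(n)=0)<\infty$ already).

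The point I expect to be the real obstacle — and the reason the naive estimate $\sum_n\P(|S(n)|\le C)<\infty$ fails for $\b\le\tfrac12$ — is the strong clustering of visits: once $S$ enters $[-C,C]$ inside block $\ell$ it returns there of order $\sqrt{|I_\ell|}$ times, so $\sum_{k\in I_\ell}\P(|S(k)|\le C)$ overcounts $\P(D_\ell)$ by exactly that factor. The device above evades this by collapsing all of $D_\ell$ onto the single event ``$T$ hits the unique admissible level $v(s)$'': one then pays only (i) the Proposition~\ref{prop:SUL} point-probability bound $(2C+1)\nu(k_\ell-1)^{-1/2-\b}\asymp\ell^{-1}\cdot k_\ell^{-1/2}$, a \emph{full} factor $\asymp\ell$ smaller than the ``scale-only'' rate $k_\ell^{-1/2}$ because the in-block step size is the large integer $\ell$, and (ii) the trivial $\P(\exists\,h\le|I_\ell|:T_h=v)\le1$, which summed over the $\asymp\ell$ admissible quotients $v$ contributes the offsetting $\asymp\ell\sqrt{|I_\ell|}$. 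Two places to be careful: the clean counting $\sum_{v\ge1}\P(\max_{h\le|I_\ell|}T_h\ge v)=O(\sqrt{|I_\ell|})$ (Doob's $L^2$ maximal inequality, or the reflection bound $\P(\max\ge v)\le2\P(T_m\ge v)$, handles it), and the scenario of a visit near the end of a block — which here is automatic, since we always condition at the block start $k_\ell-1$ and the bound $\P(\exists\,h\le|I_\ell|:T_h=v)$ already covers the whole block. Beyond Proposition~\ref{prop:SUL} (whose full rate $n^{-1/2-\b}$, not merely $n^{-1/2}$, is essential), everything used is the reflection principle and elementary estimates.
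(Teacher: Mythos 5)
Your proof is correct, and its skeleton is the paper's: you partition time into the blocks on which $a_k$ is constant, you invoke the point-probability bound of Proposition~\ref{prop:SUL} at the block start, and you conclude with Borel--Cantelli applied to the per-block events, finishing with a countable union over integer $C$. The genuine difference is how the in-block randomness is controlled. The paper fixes a single target $z\in\Z$, notes that a visit inside block $M$ forces $S(k_M)\equiv z \pmod M$, bounds the in-block fluctuation by a high-probability cutoff $Q=M^{\frac1{2\b}+1-\eps}$ via Lemma~\ref{lem:AH} and a union bound (paying a summable exceptional term $\alpha_M$), and then counts the roughly $2Q/M$ admissible residues in $(-Q,Q)$, getting $\P(E_M)=O(M^{-1-\eps})$; it also treats $\b>1/2$ separately, where $\sum_n\P(|S(n)|\le C)<\infty$ already. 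You instead condition at the block start and decompose over the unique admissible level $v(s)$ of the embedded simple random walk, so the number of reachable levels is controlled by $\sum_{v\in\Z}\P(\exists\,h\le|I_\ell|:T_h=v)\le 1+4\sqrt{|I_\ell|}$ (reflection plus $\E[T_m^+]\le\sqrt m$), i.e.\ by the typical range $\sqrt{|I_\ell|}$ rather than the worst-case window $Q/M$. This buys a sharper and cleaner estimate $\P(D_\ell)=O(\ell^{-3/2})$ with no $\eps$-bookkeeping, no exceptional event, no case split in $\b$, and the whole interval $[-C,C]$ handled at once rather than integer by integer; the cost is nothing beyond elementary random-walk facts, since Proposition~\ref{prop:SUL} (whose full rate $n^{-1/2-\b}$ you correctly identify as the essential input) is common to both arguments. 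The only caveats are cosmetic: your ``unique level'' step needs $\ell>2C$ and Proposition~\ref{prop:SUL} needs the block index large, but discarding finitely many blocks is harmless for Borel--Cantelli.
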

\begin{proof}
In the case $\b>1/2$ the a.s.\ transience follows immediately from Borel-Cantelli lemma and Proposition~\ref{prop:SUL}, as $\sum_n \P(|S(n)|\le C)<\infty$ for each $C\ge 1$.

Assume from now on that $0<\b\le 1/2$.
Define $k_m$, $\Delta_m$, $m_n$ as in Case 3 of the proof of Lemma~\ref{lemSUL}.  Fix a positive integer $M$ and consider now only those $n$ for which $m_n=M$.  Let
$I_M=\{k_M,k_M+1,\dots,k_{M+1}-1\}$.
Note that the elements of $I_M$ are precisely those $n$ for which $a_n=M$, and  that the caridnality of $I_M$ is of order $M^{1/\b-1}$. Next, fix some $z\in\Z$ and define
$$
E_M=E_M(z)=\{S(n)=z\text{ for some } n\in I_M\}\}.
$$
For each $z$ we will show that $\sum_M \P(E_M)<\infty$, and so by the Borel-Cantelli lemma a.s.\ only finitely many events $E_M$ occur. Since $S(n)$ takes only integer values, this will imply that the walk is not $C$-recurrent  for any $C\ge 0$.

So, fix $z$ from now on, write $S(n)=S(k_M)+R(n)$ where
$$
R(n)=\sum_{i=k_M}^n  a_{i} X_i=M\,\sum_{i=k_M}^n   X_i,
$$
Observe also that $S(k_M)$ and $R(n)$ are independent.
In order $S(n)=z$ for some $n\in I_M$, we need that $S(k_M)=z\mod M$. Let $Q=M^{\frac 1{2\b}+1-\eps}$ for an $\eps\in(0,1/2)$. Assuming $M$ is so large that $Q\ge 2|z|$,
\begin{align*}
\P(|R(n)|\ge Q-|z|)&\le 2\,\exp\left(-\frac{(Q/2)^2}{2 M^2\cdot (k_{M+1}-k_M)}\right)=
2\,\exp\left(-\frac{\b Q^2}{(8+o(1)) M^2\cdot M^{\frac1{\b}-1}}\right)\\
&= 2\,\exp\left(-\frac{\b M^{1-2\eps}}{8+o(1)} \right)
\quad\text{for all }n\in I_M
\end{align*}
by Lemma~\ref{lem:AH}; hence 
$$
\P(\max_{n\in I_M}|R(n)|\ge Q-|z|)\le |I_M|\times 
2\,\exp\left(-\frac{\b M^{1-2\eps}}{8+o(1)}\right)
=:\alpha_M=O\left(M^{\frac1\b-1} e^{-\frac{\b M^{1-2\eps}}{8+o(1)}}\right),
$$
which is summable in $M$. So, 
\begin{align*}
\P(E_M)&\le \P\left(E_M,\ \max_{n\in I_M}|R(n)|< Q-|z|\right)+\P\left(\max_{n\in I_M}|R(n)|\ge Q-|z|\right)
\\ &
=\P\left(E_M,\ S(k_M)=z\mod M,\ \max_{n\in I_M}|R(n)|< Q-|z|\right)+\alpha_M=(*)+\alpha_M
\end{align*}
where the term $\alpha_M$ is summable since $1-2\eps>0$.
Since $E_M$ implies  implies $-S(k_M)=R(n)-z$ for some $n\in I_M$,
\begin{align*}
(*)&\le
\P(|S(k_M)|<Q,\ S(k_M)=z\mod M)
=\sum_{j:\ |j|<Q,\ j=z\mod M}
\P(|S(k_M)|=j)
\\ &
\le \frac{\nu}{k_M^{1/2+\b}}\times |\{{j:\ |j|<Q,\ j=z\mod M}\}|
\le \frac{\nu+o(1)}{M^{1+\frac1{2\b}}}\times\left[ \frac{2Q+1}{M}+1\right]
=\frac{(\nu+o(1))}{\pi\, M^{1+\eps}}
\end{align*}
by Proposition~\ref{prop:SUL}. The RHS is summable in $M$,  which concludes the proof.
\end{proof}

\begin{rema}
By setting $\eps=1/2-\d/2$, where $\d>0$ is very close to zero, we can ensure that 
\begin{align*}
\P(|S(n)|< M^{1/2-\d} \text{ for some }n\in I_M)
&\le \sum_{z:|z|<M^{1/2-\d}}\P(E_M(z))\le
 \left[\frac{\mathrm{const}}{M^{1+\eps}} +\alpha_M\right]\times 2M^{1/2-\d}
\\ & =\frac{2\,\mathrm{const}}{M^{1+\d/2}} +2M^{1/2-\d}\alpha_M
\end{align*}
is summable. Hence, a.s.\ eventually $|S(n)|$ will be larger that $n^{\b/2-\d}$ for any $\d>0$.
\end{rema}

\section*{Appendix 1: Modified conditional Borel-Cantelli lemma}
\begin{lemma}\label{lem:likeBC}
Suppose that we have an increasing sequence of sigma-algebras~$\G_m$ and a sequence of $\G_m$-measurable events $A_m$ and $E_m$  such that
\begin{align*}
\P(A_m\mid E_{m-1},{\cal G}_{m-1})\ge \a_m,\quad
\P(E_m^c)\le \eps_m\quad\text{a.s.}
\end{align*}
where the non-negative $\a_n$ and $\eps_n$ satisfy
\begin{align}\label{BCsum}
\sum_m \a_m=\infty,\quad \sum_m \eps_m<\infty.
\end{align}
Then $\P(A_m\text{ i.o.})=1$.
\end{lemma}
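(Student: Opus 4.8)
The plan is to combine the classical (first) Borel--Cantelli lemma, applied to the events $E_m^c$, with the conditional (Lévy) form of the second Borel--Cantelli lemma, applied to the events $A_m$ along the filtration $(\G_m)$. In outline: control $E_m$ by summability of $\eps_m$, transfer the hypothesis $\P(A_m\mid E_{m-1},\G_{m-1})\ge\a_m$ into an eventual pointwise lower bound for $\P(A_m\mid\G_{m-1})$, sum it up to get divergence, and then run the standard martingale argument.

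First I would note that $\sum_m\eps_m<\infty$ together with $\P(E_m^c)\le\eps_m$ gives, by the ordinary Borel--Cantelli lemma, $\P(E_m^c\ \text{i.o.})=0$; equivalently, on a full-probability event there is an a.s.\ finite random index $M_0$ with $E_m$ occurring for all $m\ge M_0$. Next I would read the hypothesis $\P(A_m\mid E_{m-1},\G_{m-1})\ge\a_m$ correctly: since $E_{m-1}\in\G_{m-1}$, this is just the statement that on the event $E_{m-1}$ one has $\P(A_m\mid\G_{m-1})\ge\a_m$ a.s. Combining the two, on $\{E_m\ \text{eventually}\}$ we get $\P(A_m\mid\G_{m-1})\ge\a_m$ for every $m>M_0$, hence
$$
\sum_{m}\P(A_m\mid\G_{m-1})\ \ge\ \sum_{m>M_0}\a_m\ =\ \infty\qquad\text{a.s.},
$$
the last equality because the $\a_m$ are non-negative with divergent sum, so every tail still diverges.

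It then remains to upgrade $\sum_m\P(A_m\mid\G_{m-1})=\infty$ to $\P(A_m\ \text{i.o.})=1$, which is the textbook conditional Borel--Cantelli argument: the compensated sums $Z_n=\sum_{m=1}^n\bigl(\mathbf 1_{A_m}-\P(A_m\mid\G_{m-1})\bigr)$ form a martingale whose increments are bounded by $1$ in absolute value (here I use $A_m\in\G_m$ and that $\G_m$ increases), so by the standard dichotomy for martingales with uniformly bounded increments, a.s.\ either $Z_n$ converges to a finite limit or $\limsup_n Z_n=+\infty$; in particular $\P(Z_n\to-\infty)=0$. Since $\sum_{m=1}^n\mathbf 1_{A_m}=Z_n+\sum_{m=1}^n\P(A_m\mid\G_{m-1})$ is non-decreasing in $n$, were it to stay bounded on a set of positive probability then $Z_n\to-\infty$ there, a contradiction; hence a.s.\ on $\{\sum_m\P(A_m\mid\G_{m-1})=\infty\}$ one has $\sum_m\mathbf 1_{A_m}=\infty$, i.e.\ $A_m$ occurs infinitely often. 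Together with the previous paragraph this yields $\P(A_m\ \text{i.o.})=1$. (Alternatively, one may simply quote Lévy's extension of the Borel--Cantelli lemma, which states $\{A_m\ \text{i.o.}\}=\{\sum_m\P(A_m\mid\G_{m-1})=\infty\}$ a.s.)

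The only genuinely delicate point — and the one I would be most careful about — is the bookkeeping around $\P(A_m\mid E_{m-1},\G_{m-1})\ge\a_m$: one must observe that $E_{m-1}$ is already $\G_{m-1}$-measurable, so this is exactly a lower bound for $\P(A_m\mid\G_{m-1})$ on the event $E_{m-1}$, and then feed this random lower bound (valid eventually, thanks to the $\eps_m$ part) into the martingale argument. Everything else is routine.
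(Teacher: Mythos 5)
Your proof is correct, but it takes a genuinely different route from the paper's. You reduce the lemma to two standard results: the first Borel--Cantelli lemma applied to $E_m^c$ (so that a.s.\ $E_{m-1}$ holds for all large $m$), and L\'evy's conditional extension of the second Borel--Cantelli lemma along the filtration $(\G_m)$. The key bookkeeping point you flag is handled correctly: since $E_{m-1}\in\G_{m-1}$, the hypothesis $\P(A_m\mid E_{m-1},\G_{m-1})\ge\a_m$ is exactly the statement that $\P(A_m\mid\G_{m-1})\ge\a_m$ a.s.\ on $E_{m-1}$ (and this is also how the paper uses the hypothesis, conditioning on $\G_{m-1}$-measurable sets intersected with $E_{m-1}$), so after discarding a countable union of null sets you get $\sum_m\P(A_m\mid\G_{m-1})=\infty$ a.s., every tail of $\sum\a_m$ being divergent; the bounded-increment martingale dichotomy for $Z_n=\sum_{m\le n}\bigl(\mathbf 1_{A_m}-\P(A_m\mid\G_{m-1})\bigr)$ then gives $A_m$ i.o.\ a.s. The paper instead argues in an elementary, self-contained way: setting $B_{\ell,m}=\bigcap_{i=\ell}^m A_i^c$, it derives the one-step inequality $\P(B_{\ell,m})\le(1-\a_m)\P(B_{\ell,m-1})+\eps_{m-1}$, unrolls it by induction, and shows the resulting bound is at most $\sum_{i\ge M}\eps_i+\prod_{i=M+1}^{m}(1-\a_i)\bigl(1+\sum_{i<M}\eps_i\bigr)$, which can be made arbitrarily small by choosing $M$ and then $m$ large; hence $\P(B_{\ell,\infty})=0$ for every $\ell$. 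Your approach is shorter and exhibits the lemma as essentially L\'evy's conditional Borel--Cantelli plus a null-set correction, at the price of invoking martingale convergence/oscillation; the paper's approach avoids all martingale machinery and stays at the level of finite conditional-probability manipulations. Both arguments are valid.
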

\begin{proof}
Let $m>\ell\ge 1$ and $B_{\ell,m}=\bigcap_{i=\ell}^m A_i^c$.
We need to show that for any $\ell\ge 1$
\begin{align}\label{BCrez}
\P(B_{\ell,\infty})=\P\left(A_{\ell}^c \cap A_{\ell+1}^c\cap A_{\ell+2}^c\cap \dots \right)=0.
\end{align}
We have for $m\ge \ell+1$
\begin{align}\label{BCit}
\begin{split}
\P(B_{\ell,m})&=\P\left(A_m^c\cap B_{\ell,m-1} \right)\le 
\P\left(A_m^c\cap B_{\ell,m-1} \cap E_{m-1}\right)+\P\left(E_{m-1}^c\right)\\
&=
\P\left(A_m^c\mid B_{\ell,m-1} \cap E_{m-1}\right)
\P\left(B_{\ell,n-1} \cap E_{m-1}\right)
+\P\left(E_{m-1}^c\right)\\
&\le
\P\left(A_m^c\mid B_{\ell,m-1} \cap E_{m-1}\right)
\P\left(B_{\ell,m-1} \right)
+\eps_{m-1}
 \le (1-\a_m)\P\left(B_{\ell,m-1}\right)
+\eps_{m-1}.
\end{split}
\end{align}
By induction over $m,m-1,m-2,\dots,\ell+1$ in~\eqref{BCit}, we get that
\begin{align*}
\P(B_{\ell,m})&\le \eps_{m-1}+
(1-\a_m)\left[(1-\a_{m-1})\P\left(B_{\ell,m-2} \mid {\cal G}_m\right)+\eps_{m-2}\right]\le \dots
\\
& \le \eps_{m-1}+(1-\a_m)\eps_{m-2}
+(1-\a_m)(1-\a_{m-1})\eps_{m-3}+
\dots
\\ &
+(1-\a_m)(1-\a_{m-1})\dots(1-\a_{\ell+2})\eps_{\ell}
+(1-\a_m)(1-\a_{m-1})\dots (1-\a_{\ell+1}).
\end{align*}
Hence, for any integer $M\in (\ell, m)$
\begin{align*}
\P(B_{\ell,m})&\le \eps_{m-1}+\eps_{m-2}+\dots+\eps_{M}
\\ &
+(1-\a_m)(1-\a_{m-1})\cdots(1-\a_{M+1})\eps_{M-1}
+\dots+(1-a_m)(1-\a_{m-1})\dots(1-\a_{\ell+2})\eps_{\ell}
\\ &
+(1-\a_m)(1-\a_{m-1})\dots (1-\a_{\ell+1})
\\ &
\le [\eps_{m-1}+\eps_{m-2}+\dots+\eps_{M}]
\\ &+
(1-\a_m)(1-\a_{m-1})\cdots(1-\a_{M+1})
[\eps_{M-1}+\eps_{M-2}+\dots+\eps_\ell+1]
\end{align*}
Fix any $\delta>0$. By~\eqref{BCsum} we can find an
$M$ be so large that $\sum_{i=M}^\infty \eps_i<\d/2$.
Then, again by~\eqref{BCsum}, there exists an $m_0>M$ such that  $\displaystyle \prod_{i=M+1}^{m_0} (1-\a_i)<\frac{\delta}{2\left(1+\sum_{i=\ell}^{M-1} \eps_i\right)}$. Hence, for all $m\ge m_0$ we have $\P(B_{\ell,m})\le \d/2+\d/2=\d$. Since $\d>0$ is arbitrary, and $B_{\ell,m}$ is a decreasing sequence of events in~$m$, we conclude that $\P(B_{\ell,\infty})=0$, as required.
\end{proof}

\section*{Appendix 2: Generalization of Blair Sullivan's  results}

Let $a_k=\lfloor k^\b\rfloor$, where $0<\b<1$.

\begin{lemma}\label{lemSUL}
Let $F_n(t)=\prod_{k=1}^n |\cos(t a_k)|$. Then 
$$
\int_{-\pi}^\pi F_n(t)\dd t= \frac {\sqrt{8\pi(1+2\b)}+o(1)}{n^{\b+1/2}}
\quad\text{ as }n\to\infty.
$$
\end{lemma}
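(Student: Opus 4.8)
The plan is to localise $\int_{-\pi}^{\pi}F_n$ near the few points where $F_n$ is close to $1$, replace $F_n$ there by a Gaussian, and bound the rest away crudely. Since the $a_k$ are integers, every factor $|\cos(ta_k)|$ is even and $\pi$-periodic, hence so is $F_n$; therefore $\int_{-\pi}^{\pi}F_n(t)\,\dd t=2\int_{-\pi/2}^{\pi/2}F_n(t)\,\dd t$, and on $[-\pi/2,\pi/2]$ we have $F_n(t)=1$ only at $t=0$. Fixing a small $\d\in(0,\pi/2)$, I would split $[-\pi/2,\pi/2]$ into three ranges: (I) $|t|\le\eps_n:=(\log n)\,n^{-\b-1/2}$; (II) $\eps_n\le|t|\le \d\,n^{-\b}$; (III) $\d\,n^{-\b}\le|t|\le\pi/2$.

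On Range (I) one has $|ta_k|\le\eps_n a_n\le(\log n)\,n^{-1/2}\to0$ uniformly over $k\le n$, so $-\log|\cos(ta_k)|=\tfrac12 t^2a_k^2+O(t^4a_k^4)$ with a uniform constant. Summing over $k\le n$ and using $\sum_{k\le n}a_k^2=\frac{n^{2\b+1}}{2\b+1}+O(n^{\b+1})$ together with $\sum_{k\le n}a_k^4=O(n^{4\b+1})$, one obtains uniformly on Range (I)
$$
\log F_n(t)=-\frac{t^2n^{2\b+1}}{2(2\b+1)}+o(1),
$$
the two error terms being $O\!\big((\log n)^4/n\big)$ and $O\!\big((\log n)^2n^{-\b}\big)$, hence $o(1)$ \emph{in absolute value} — this is exactly why $\eps_n$ must be taken only logarithmically larger than the Gaussian scale $n^{-\b-1/2}$. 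Substituting $t=u\,n^{-\b-1/2}$ and letting $n\to\infty$ (note $\eps_n n^{\b+1/2}=\log n\to\infty$), dominated convergence gives
$$
\int_{|t|\le\eps_n}F_n(t)\,\dd t=(1+o(1))\,n^{-\b-1/2}\int_{-\infty}^{\infty}e^{-u^2/(2(2\b+1))}\,\dd u=(1+o(1))\sqrt{2\pi(2\b+1)}\,n^{-\b-1/2}.
$$

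On Range (II) we still have $|ta_k|\le\d<\pi/2$, so $-\log\cos u\ge\tfrac12 u^2$ yields $F_n(t)\le\exp(-c\,t^2n^{2\b+1})$ for some $c=c(\b)>0$; integrating this sub-Gaussian tail from $\eps_n$ (where $\eps_n^2n^{2\b+1}=(\log n)^2$) gives $\int_{\text{(II)}}F_n=O(n^{-2\b-1})=o(n^{-\b-1/2})$. Range (III) is the arithmetic core, and here the block structure enters — this is where the quantities $k_m,\Delta_m,m_n$ live: $k_m=\lceil m^{1/\b}\rceil$ is the first index with $a_k=m$, $\Delta_m=\#\{k\le n:a_k=m\}$ the block size (which equals $k_{m+1}-k_m$ for $m<M$), and $M=m_n=\lfloor n^{\b}\rfloor$, so that $n$ lies in the (incomplete) last block. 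Grouping steps by common value, $F_n(t)=\prod_{m=1}^{M}|\cos(tm)|^{\Delta_m}$; for $1\le m<M$ the block is complete and, since $1/\b-1>0$, $\Delta_m\ge\tfrac12 m^{1/\b-1}$, while $M^{1/\b}\sim n$. Using that $h(u):=-\log|\cos u|$ is $\pi$-periodic with $h(u)\ge\tfrac12\,\mathrm{dist}(u,\pi\Z)^2$, writing $\theta=t/\pi\in[\d/(\pi n^{\b}),1/2]$ and letting $d(x)$ be the distance from $x$ to the nearest integer,
$$
-\log F_n(t)\ \ge\ \frac{\pi^2}{2}\sum_{m=1}^{M}\Delta_m\,d(m\theta)^2\ \ge\ \frac{\pi^2}{4}\Big(\frac{M}{2}\Big)^{1/\b-1}\sum_{M/2\le m<M}d(m\theta)^2 .
$$
It then remains to show $\sum_{M/2\le m<M}d(m\theta)^2\ge c''M$ uniformly for $\theta\in[\d/(\pi M),1/2]$ (recall $n^{-\b}\asymp M^{-1}$); this is elementary, via a short case split on the magnitude of $\theta$ near $\theta\asymp1/M$: for the small $\theta$ one uses $d(m\theta)=m\theta$ on a block of indices $m$ chosen so that $m\theta$ stays in a fixed compact subinterval of $(0,1)$, and for larger $\theta$ one combines $d(x)\ge\pi^{-1}|\sin\pi x|$ with the geometric-sum estimate $\big|\sum_{M/2\le m<M}e^{2\pi i m\theta}\big|\le(\sin\pi\theta)^{-1}$. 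Consequently $-\log F_n(t)\ge c_3M^{1/\b}\ge c_4n$, so $F_n(t)\le e^{-c_4n}$ on Range (III) and $\int_{\text{(III)}}F_n=O(e^{-c_4n})=o(n^{-\b-1/2})$.

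Adding the three ranges, $\int_{-\pi/2}^{\pi/2}F_n=(1+o(1))\sqrt{2\pi(2\b+1)}\,n^{-\b-1/2}$, and doubling yields $\int_{-\pi}^{\pi}F_n=(1+o(1))\sqrt{8\pi(1+2\b)}\,n^{-\b-1/2}$, which is the assertion. I expect the main obstacle to be Range (III): obtaining the lower bound on $\sum_{M/2\le m<M}d(m\theta)^2$ \emph{uniformly} in $\theta$ (an equidistribution-type input, elementary but requiring care near small denominators and near $\theta\asymp1/M$), together with the bookkeeping that ties $\Delta_m$ to $m^{1/\b-1}$ and correctly discards the incomplete last block containing $n$. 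A secondary technical point is the verification in Range (I) that the Taylor errors are $o(1)$ in absolute rather than merely relative terms, which is what forces the logarithmic choice of $\eps_n$.
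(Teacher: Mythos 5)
Your proof is correct, and while its overall architecture (localise at $t=0$, Gaussian main term, exponentially small remainder) matches the paper's, your treatment of the outer range is genuinely different. In the central window the paper cuts off at $n^{-\b-1/2+\eps}$ where you use $(\log n)\,n^{-\b-1/2}$ — immaterial, and your bookkeeping of the Taylor errors there is right. The real divergence is for $t$ of order $n^{-\b}$ and beyond: the paper first applies AM--GM to get $F_n(t)\le\bigl(\tfrac12+\tfrac{U_n(t)}{2n}\bigr)^{n/2}$ with $U_n(t)=\sum_k\cos(2ta_k)$, estimates $U_n$ blockwise with the specific choice $r=1-\b$ and a geometric cosine sum — which only works for $t\ge c_1 n^{-\b}$ with $c_1$ large — and then needs a separate monotonicity trick ($F_n\le F_{\lfloor rn\rfloor}$, its Case 2) to bridge the intermediate interval $[n^{-\b},c_1 n^{-\b}]$. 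You instead push the sub-Gaussian bound up to $\d n^{-\b}$ with $\d$ small (legitimate, since then $ta_k\le\d<\pi/2$ for all $k\le n$), and on $[\d n^{-\b},\pi/2]$ use the pointwise bound $-\log|\cos u|\ge\tfrac12\,\mathrm{dist}(u,\pi\Z)^2$ together with the uniform lower bound $\sum_{M/2\le m<M}d(m\theta)^2\ge c''M$ for $\theta\in[\d/(\pi M),1/2]$; this bypasses both AM--GM and the monotonicity trick and gives $F_n\le e^{-c_4 n}$ on the whole outer range in one stroke. The price is exactly the lemma you flag: that uniform bound is only sketched, and the delicate regime is $M\theta$ of constant order (say $M\theta\in(\tfrac12,2K)$ for a fixed $K$), where the geometric-sum estimate $|\sum_{M/2\le m<M}e^{2\pi i m\theta}|\le(\sin\pi\theta)^{-1}$ is too weak and the identity $d(m\theta)=m\theta$ fails because the points cross an integer; there you must select a sub-block of $\asymp M$ indices whose fractional parts $\{m\theta\}$ stay in a fixed compact subset of $(0,1)$ — easy, since consecutive points move by $\theta=O(1/M)$ — and accept that $c''$ and $c_4$ then depend on $\d$, which is harmless. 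With that case analysis written out (and the easy check $\Delta_m\ge\tfrac12 m^{1/\b-1}$ for large $m$, discarding the incomplete last block as you do), your argument is complete, and arguably more self-contained than the paper's Case 2/Case 3 combination, at the cost of proving an equidistribution-type inequality that the paper's large-$c_1$ plus monotonicity device is designed to avoid.
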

\begin{rema}
Note that for $\beta=1$ we would have obtained the same result as in~\cite{SUL}.
\end{rema}
\begin{proof}
We will proceed in the spirit of~\cite{SUL}.
Note that by symmetry
$$
\int_{-\pi}^\pi F_n(t)\dd t= 2 \int_0^{\pi} F_n(t)\dd t
=2 \int_0^{\pi/2} F_n(t)\dd t+2 \int_0^{\pi/2} F_n(\pi-t)\dd t
=4 \int_0^{\pi/2} F_n(t)\dd t
$$
since $|\cos((\pi-t)a_k)|=|\cos(\pi a_k-t a_k)|=|\cos(t a_k)|$ as $a_k$ is an integer.
Let $\eps>0$ be very small and define
\begin{align*}
I_0&=\left[0,\frac1{n^{\b+1/2-\eps}}\right],\quad
I_1=\left[\frac1{n^{\b+1/2-\eps}},\frac{1}{n^\b}\right],\quad
I_2=\left[\frac{1}{n^\b},\frac{c_1}{n^\b}\right],\quad
I_3=\left[\frac{c_1}{n^\b},\frac\pi2\right].
\end{align*}
for some $c_1>1$ to be determined later.
Then 
$$
\int_0^{\pi/2} F_n(t)\dd t=\int_{I_0} F_n(t)\dd t+\int_{I_1} F_n(t)\dd t+\int_{I_2} F_n(t)\dd t+\int_{I_3} F_n(t)\dd t.
$$
We will show that the contribution of all the integrals,  except the first one, is negligible, and estimate the value of the first one.

First, observe that when $0\le t a_k\le \pi/2$ for all $k\le n$, by the elementary inequality $|\cos u|\le e^{-u^2/2}$ valid for $|u|\le \pi/2$, we have
\begin{align}\label{eqint}
F_n(t)&\le\prod_{k=1}^n \exp\left(-\frac{t^2 a_k^2}{2}\right)
= \exp\left(-\frac{t^2}2 \sum_{k=1}^n a_k^2\right)
=\exp\left(-\frac{t^2\, n^{2\b+1}(1+o(1))}{2(1+2\b)} \right)
\end{align}
since $a_k^2=k^{2\b}(1+o(1))$.

\vskip 5mm
\noindent\underline{Case 0}: $t\in I_0$

Here $t a_k\le  \frac1{n^{1/2-\eps}}\ll 1$, hence for $n$ large enough
\begin{align*}
\frac{(ta_k)^2}{2}\le -\ln \cos(t a_k)
=\frac{(ta_k)^2}{2}
+O\left((ta_k)^4\right)
\le (1+o(1))\frac{(ta_k)^2}{2}
\end{align*}
yielding $F_n(t)=\exp\left(-\frac{t^2\, n^{2\b+1} \rho_{n,t}}{2(1+2\b)} \right)
$
where $\rho_{n,t}=1+o(1)$ for large $n$ (compare with~\eqref{eqint}). Since for any $r>0$ we have
\begin{align*}
\int_0^{n^{-\b-1/2+\eps}} 
\exp\left(-\frac{t^2\, n^{2\b+1} r}{2(1+2\b)} \right)
\dd t&=
\frac1{n^{1/2+\b}}\int_0^{n^{\eps}} 
\exp\left(-\frac{s^2\,r}{2(1+2\b)} \right)
\dd s
\\ &=\frac1{n^{1/2+\b}}
\left[\sqrt{\frac{\pi(1 + 2\b)}{2r}}+o(1)\right]
\end{align*}
where the main term is monotone in $r$, by substituting $r=\rho_{n,t}=1+o(1)$ we conclude that
$$
\int_{I_0} F_n(t)\dd t=
\frac1{n^{1/2+\b}}
\left[\sqrt{\pi(1/2 + \b)}+o(1)\right].
$$

\vskip 5mm\noindent\underline{Case 1}: $t\in I_1$

Since $t a_k\le 1<\pi/2$, by~\eqref{eqint} for some $C_2>0$ we have $F_n(t) \le \exp\left(-\frac{n^{2\eps}(1+o(1))}{2(1+2\b)} \right)\le e^{-C_2 n^{2\eps}}$, 
so  $\int_{I_1} F_n(t)\dd t 
\le e^{-C_2 n^{2\eps}}$, which decays faster than polynomially.

\vskip 5mm
\noindent\underline{Case 2}: $t\in I_2$

As in Case 2 in~\cite{SUL}, we will use monotonicity of $F_n(t)$ in $n$. Let $r=c_1^{-1/\b}\in (0,1)$ then $\lfloor rn\rfloor^\b\le (rn)^{\b}=\frac{n^\b}{c_1}$, consequently by~\eqref{eqint}, since $t\le \frac{c_1}{n^\b}$,
$$
F_{\lfloor rn\rfloor}(t)\le \exp\left(-\frac{t^2\, (rn)^{2\b+1}(1+o(1))}{2(1+2\b)} \right)
$$
and since $F_n(t)\le F_{\lfloor rn\rfloor}(t)$, we get a similar bound as in Case 1.

\vskip 5mm
\noindent\underline{Case 3}: $t\in I_3$

Let
\begin{align*}
k_m&=\inf\{k\in\Z_+:\ k^\b\ge m\}=\lceil m^{1/\b}\rceil, \quad m=1,2,\dots;
\\
\Delta_m&=k_{m+1}-k_m=\b^{-1}\, m^\g +O(m^{1/\b-2})  +\rho_0, \qquad \g:=\frac{1-\b}{\b},
\end{align*}
where $|\rho_0|\le 1$.
Then 
$$
a_k=m\quad\text{if and only if}\quad k\in\{k_m,k_m+1,\dots,k_m+\Delta_m-1\, (\equiv k_{m+1}-1)\}.
$$
For $n\in\Z_+$ let 
$$
m_n=\max\{m:\ k_m\le n\}=n^{\b}\, (1+o(1)),
\qquad n\in [k_{m_n},k_{m_n}+\Delta_{m_n}-1].
$$
By the inequality between the mean geometric and the mean arithmetic,
\begin{align*}
F_n(t)&=\sqrt{\prod_{k=1}^n \cos^2(t a_k)}
= \left(\sqrt[n]{\prod_{k=1}^n \cos^2(t a_k)}\right)^{n/2}
\le
\left(\frac{\sum_{k=1}^n \cos^2(t a_k)}n\right)^{n/2}
\\
&=
\left(\frac 12 +\frac{U_n(t)}{2n}\right)^{n/2}
\qquad \text{where }U_n(t)=\sum_{k=1}^n \cos(2 t a_k).
\end{align*}
We will show that if $t$ is not too small, then for some $0\le c<1$ we have $U_n(t)\le c n$ and hence $F_n(t)\le \left(\frac{1+c}2\right)^{n/2}$.
In order to do that, first note that 
$$
U_n(t)\le \sum_{k=1}^{k_{m_n}} \cos(2 t a_k)
+(n-k_{m_n})=
\sum_{m=1}^{m_n} \Delta_m \cos(2 t m)
+(n-k_{m_n}).
$$
Let $r\in (0,1)$ and assume w.l.o.g.\ that $r m_n$ is an integer. For $m\in [r m_n+1,m_n]$ we have 
$$
A\le \Delta_m \le \bar A\text{  where }A=\b^{-1}  (r m_n)^\g+O(1), \quad \bar A=\b^{-1}  m_n^\g+O(1).
$$
Consequently,
\begin{align*}
& \sum_{m=r m_n+1}^{m_n} \Delta_m \cos(2 t m)
\le \sum_{m=r m_n+1}^{m_n} \left[\bar A \cdot{\bf 1}_{\cos(2 t m)\ge 0}+A \cdot {\bf 1}_{\cos(2 t m)<0}
\right] \cos(2 t m)
\\
&=\sum_{m=r m_n+1}^{m_n} \left[\bar A-A\right] \cos(2 t m)\, {\bf 1}_{\cos(2 t m)\ge 0}
+\sum_{m=r m_n+1}^{m_n}A \cos(2 t m)
\\
&\le  (1-r)m_n\left(\bar A-A\right)
+A \sum_{m=r m_n+1}^{m_n} \cos(2tm)
=(1-r)m_n \left(\bar A-A\right)
\\ &  
+ A \left(
 \cos^2\left(r t m_n+t\right) -\cos^2(t m_n + t) + \frac{\cos t}{2\sin t} \left[ \sin(2 t (m_n+ 1)) - \sin(2t(r m_n+1))\right]
 \right)
 \\
&\le \frac{m_n^{1/\b}}{\b} (1-r) \left[1-r^\g+O(m_n^{-\g})\right]+
 m_n^{\g}\left(\frac{r^{\g}}\b+O(m_n^{-\g})\right)\left(1+\frac{1}{|\sin t|}\right)
\end{align*}
Hence, since $m_n^{1/\b}=n+o(n)$,
\begin{align*}
U_n(t)&\le \sum_{m=1}^{r m_n} \Delta_m+\sum_{m=r m_n+1}^{m_n}
 \Delta_m\cos(2tm)+(n-k_{m_n})
=k_{r m_n+1}+\sum_{m=r m_n+1}^{m_n}\Delta_m\cos(2tm)+O(\Delta_{m_n})\\
&\le r^{1/\b}n +\frac{m_n^{1/\b}}{\beta}
(1-r)\left[1-r^{\g}+O(m_n^{-\g})\right]+
 m_n^{\g}\left(\frac{r^{\g}}\b+O(m_n^{-\g})\right)\left(1+\frac{1}{|\sin t|}\right)
 +O(m_n^{\g\b})
 \\
&\le 
n\left(r^{1/\b}+\frac {(1-r)(1-r^{\g})}{\b}\right)+
 \frac{4 n^{1-\b}\b^{-1} r^{\g}}{|\sin t|}+o(n)
\end{align*}
Consider now the the function
$$
h(r,\b):=r^{1/\b}+\frac {(1-r)(1-r^\g)}{\b}
=r^{1/\b}+\frac {(1-r)\left(1-r^{1/\b-1}\right)}{\b}
$$
and note that
$$
h(1-\b,\b)=(1-\b)^{1/\b}+1-(1-\b)^{1/\b-1}
=1-\b(1-\b)^{1/\b-1}\le 1-e^{-1}\b<1-\b/3
$$
since $\sup_{\b\in(0,1)}(1-\b)^{1/\b-1}=e^{-1}$ by elementary calculus.
So if we set $r=1-\b\in (0,1)$, by noting $\ t\le 2 \sin t\ $ for $t\in [ 0,\pi/2]$, we conclude that $U_n(t)\le \left(1-\frac\b4\right) n$ provided that $t\ge \frac{c_1}{n^\b}$ for some $c_1>0$. Consequently,
$\int_{I_3} F_n(t)\dd t \le \left(1-\frac\b8\right)^{n/2}$ for large $n$, which converges to zero exponentially fast.
\end{proof}

\section*{Acknowledgment} We thank Edward Crane and Andrew Wade for providing us with useful references. The research is partially supported by Swedish Science Foundation grants VR 2019-04173 and Crafoord foundation grant no.~20190667.

\begin {thebibliography}{99}

\bibitem{ERD}
Erd\H{o}s, P.
Extremal problems in number theory. 1965 Proc. Sympos. Pure Math., Vol. VIII pp. 181–189 Amer. Math. Soc., Providence, R.I.

\bibitem{Klein}
Keller, Nathan and Klein, Ohad. (2021). Proof of Tomaszewski's Conjecture on Randomly Signed Sums. https://arxiv.org/abs/2006.16834

\bibitem{MONT}
Montgomery-Smith, S. J. 
The distribution of Rademacher sums.
Proc.\ Amer.\ Math.\ Soc.\ {\bf 109} (1990), no.~2, 517--522.

\bibitem{NG}
Nguyen, Hoi H. A new approach to an old problem of Erdős and Moser. J.~Combin.\ Theory Ser.~A {\bf 119} (2012), no.~5, 977--993.

\bibitem{SAR}
S\'ark\"ozi, A.; Szemer\'edi, E.
\"Uber ein Problem von Erd\"os und Moser. (German)
Acta Arith. 11 (1965), 205–208.

\bibitem{SUL}
Sullivan, Blair D. 
On a conjecture of Andrica and Tomescu. 
J.\ Integer Seq.\ (2013), {\bf 16}, no.~3, Article 13.3.1, 6~pp.

\bibitem{TAO}
Tao, Terence and Vu, Van H.
Inverse Littlewood-Offord theorems and the condition number of random discrete matrices. 
Ann.~of Math.~{\bf 169} (2009), no.~2, 595--632.

\bibitem{IMS} IMS Bulletin,
Volume 51, Issue 2: March 2022.




\end {thebibliography}
\end{document}